\theoremstyle{plain}
\newtheorem*{thm*}{Theorem}
\newtheorem{thm}{Theorem}[section]
\newtheorem{prop}[thm]{Proposition}
\newtheorem{lem}[thm]{Lemma}
\theoremstyle{definition}
\newtheorem{defn}[thm]{Definition}
\newtheorem{ex}[thm]{Example}
\newtheorem{rmk}[thm]{Remark}
\newtheorem*{q}{Question}
\numberwithin{equation}{thm}
\newcommand{\emphbf}[1]{\emph{\textbf{#1}}}
\newcommand{\rad}[1]{\radoperator(#1)}
\newcommand{\roof}[5]{%
\begin{tikzcd}[ampersand replacement=\&, row sep=0pt]
\& {#3} \ar{ld}[swap]{#2} \ar{rd}{#4} \\
{#1} \&\& {#5}
\end{tikzcd}
}
\DeclareMathOperator{\radoperator}{rad}
\DeclareMathOperator{\id}{id}
\DeclareMathOperator{\Hom}{Hom}
\DeclareMathOperator{\Ext}{Ext}
\DeclareMathOperator{\modu}{mod}
\DeclareMathOperator{\End}{End}
\DeclareMathOperator{\charac}{char}
\DeclareMathOperator{\Coker}{Coker}
\DeclareMathOperator{\maxSpec}{maxSpec}
\DeclareMathOperator{\add}{add}
\DeclareMathOperator{\HH}{HH}
\DeclareMathOperator{\proj}{proj}
\DeclareMathOperator{\injdim}{injdim}
\DeclareMathOperator{\projdim}{projdim}
\DeclareMathOperator{\op}{op}
\DeclareMathOperator{\Ann}{Ann}
\DeclareMathOperator{\Homcom}{\mathcal{H}om}
\DeclareMathOperator{\ev}{ev}
\DeclareMathOperator{\comp}{comp}
\DeclareMathOperator{\Der}{\mathcal{D}}
\DeclareMathOperator{\Kom}{\mathcal{K}}
\begin{document}

\title{Derived invariance of support varieties}
\author{Julian K\"ulshammer}
\author{Chrysostomos Psaroudakis}
\author{\O ystein Skarts\ae terhagen}
\date{\today}
\thanks{The authors want to thank Hongxing Chen, Steffen Koenig and \O yvind Solberg for useful discussions and valuable comments. These thanks are extended to Wei Hu for pointing out Remark \ref{standardnotneeded}}

\address{Julian K\"ulshammer\\
Institute of Algebra and Number Theory,
University of Stuttgart \\ Pfaffenwaldring 57 \\ 70569 Stuttgart,
Germany} \email{kuelsha@mathematik.uni-stuttgart.de}

\address{Chrysostomos Psaroudakis\\
Department of Mathematical Sciences, Norwegian University of Science and Technology \\
 7491 Trondheim, Norway
} \email{chrysostomos.psaroudakis@math.ntnu.no}

\address{\O ystein Skarts\ae terhagen\\
Department of Mathematical Sciences, Norwegian University of Science and Technology \\
 7491 Trondheim, Norway
} \email{oystein.skartsaterhagen@math.ntnu.no}

\keywords{%
Finite generation condition,
Hochschild cohomology, Support varieties,   
Derived equivalences}

\subjclass[2010]{%
Primary
16E40, 
16E65, 
18E30; 
Secondary
16G10 
}

\begin{abstract}
The (Fg) condition on Hochschild cohomology as well as the support variety theory are shown to be invariant under derived equivalence.
\end{abstract}

\maketitle

\tikzexternaldisable
\section{Introduction}

In 1983, Carlson \cite{Car83} introduced the concept of the support
variety of a module for group algebras using ordinary cohomology. This
concept and its generalisations to other classes of algebras led to
several interesting applications including for example a criterion for
the representation type of an algebra \cite{Fa07}, and connections to
vector bundles \cite{FP11}.  In 2004, Snashall and Solberg \cite{SS04}
extended the theory to arbitrary finite dimensional algebras using
Hochschild cohomology, and together with Erdmann, Holloway and
Taillefer \cite{EHSST04} they introduced certain finiteness
assumptions in order to obtain a theory of support varieties with good
properties. These finiteness assumptions are called the (Fg)
condition. The motivation for this article is the following question.

\begin{q}
Which types of equivalences between two algebras preserve the (Fg) condition, and more generally support varieties?
\end{q}

One answer to this question was given in \cite[Theorem 4.1]{Lin11}. Therein, Linckelmann has shown that the (Fg) condition is stable under separable equivalence provided both algebras are symmetric. In particular this includes the cases of Morita equivalences, derived equivalences, and stable equivalences of Morita type (in the case of symmetric algebras). It seems to be well-known to the experts that the (Fg) condition is stable under Morita equivalence but the authors weren't able to find a reference for this fact (which will also follow from our main theorem). 

Since the support variety of any bounded complex of projectives vanishes, a natural guess would also be that an equivalence of the singularity categories $\Der^b(A)/\Kom^b(\proj A)$ yields invariance of support varieties. However, in joint work of the two last named authors with \O yvind Solberg \cite[Example 5.5]{PSS}, a counterexample is provided. 
The last named author of this article has shown that a singular equivalence of Morita type (with level) preserves (Fg) provided both algebras are known to be Gorenstein~\cite{Ska16}. This also includes the case of stable equivalence of Morita type for self-injective algebras. Here we show the following:

\begin{thm*}
Let $A$ and $B$ be derived equivalent $k$-algebras. Then the following hold. 
\begin{enumerate}[(a)]
\item The (Fg) condition holds for $A$ if and only if it holds for $B$. 
\item If $F\colon \Der^b(A)\to \Der^b(B)$ is a derived equivalence, then the support varieties of $M^*$ and $F(M^*)$ are isomorphic for every bounded complex of $A$-modules $M^*$.
\end{enumerate}
\end{thm*}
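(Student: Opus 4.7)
The plan is to build everything on Rickard's Morita theory for derived categories. Any derived equivalence $F\colon \Der^b(A) \to \Der^b(B)$ is represented by a two-sided tilting complex $X^* \in \Der^b(B \otimes_k A^{\op})$ with quasi-inverse given by a complex $Y^* \in \Der^b(A \otimes_k B^{\op})$. Rickard together with Keller showed that such an equivalence induces a canonical graded $k$-algebra isomorphism $\phi\colon \HH^*(A) \xrightarrow{\sim} \HH^*(B)$. In particular, $\phi$ sends graded commutative Noetherian subalgebras of $\HH^*(A)$ to graded commutative Noetherian subalgebras of $\HH^*(B)$, and the condition $H^0 = \HH^0(A)$ is preserved since in degree zero $\phi$ restricts to the classical center isomorphism $Z(A) \xrightarrow{\sim} Z(B)$.

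The main technical step, and what I expect to be the core obstacle to formalize carefully, is to establish for all bounded complexes $M^*, N^*$ of finitely generated $A$-modules a natural graded isomorphism
\[
F_{M^*, N^*}\colon \Ext^*_A(M^*, N^*) \xrightarrow{\sim} \Ext^*_B(F(M^*), F(N^*))
\]
which is compatible with the Yoneda product (when $M^* = N^*$) and which intertwines the $\HH^*(A)$-action with the $\HH^*(B)$-action via $\phi$. The existence of the isomorphism itself is immediate from $F$ being triangulated and fully faithful, but the intertwining property requires a careful diagram chase: one needs to verify that for $\eta \in \HH^*(A)$ and $f \in \Ext^*_A(M^*, N^*)$ one has $F_{M^*, N^*}(\eta \cdot f) = \phi(\eta) \cdot F_{M^*, N^*}(f)$, which reduces to the compatibility $X^* \otimes_A^L \eta \otimes_A^L Y^* = \phi(\eta)$ in $\Der^b(B \otimes_k B^{\op})$.

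Part (b) then follows at once. Writing $V_A(M^*) = \maxSpec(H/\Ann_H \Ext^*_A(M^*, M^*))$ for a suitable graded commutative Noetherian subalgebra $H \subseteq \HH^*(A)$, the isomorphism $\phi$ restricts to an isomorphism of $H$ onto $\phi(H) \subseteq \HH^*(B)$ and carries the annihilator ideal of $\Ext^*_A(M^*, M^*)$ to the annihilator ideal of $\Ext^*_B(F(M^*), F(M^*))$, which induces an isomorphism of the associated affine varieties.

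For part (a), the immediate obstacle is that $F(A/\rad A)$ is in general not quasi-isomorphic to $B/\rad B$, so the (Fg) condition as stated cannot be transferred directly. The remedy is to show that if $H \subseteq \HH^*(A)$ witnesses (Fg) for $A$, then $\Ext^*_A(M^*, M^*)$ is a finitely generated $H$-module for \emph{every} $M^* \in \Der^b(\modu A)$. I would prove this by dévissage: first for modules by induction on the Loewy length, using the Noetherian property of $H$ and the $H$-linear long exact Ext sequences arising from the socle (or radical) filtration, and then for bounded complexes by induction on cohomological amplitude via the standard truncation triangles. Applying this to $M^* = F^{-1}(B/\rad B) \in \Der^b(\modu A)$ and transferring via $\phi$ together with the compatibility established above yields that $\Ext^*_B(B/\rad B, B/\rad B)$ is a finitely generated $\phi(H)$-module, and since $\phi(H)$ is a graded commutative Noetherian subalgebra of $\HH^*(B)$ with $\phi(H)^0 = \HH^0(B)$, this is exactly (Fg) for $B$.
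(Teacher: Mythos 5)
Your proposal is correct and follows essentially the same route as the paper: replace $F$ by a standard equivalence $X^*\otimes^{\mathbb{L}}_A-$ with quasi-inverse $Y^*\otimes^{\mathbb{L}}_B-$ (Rickard), use the induced isomorphism $\HH^*(A)\cong\HH^*(B)$, prove that the graded isomorphisms $\Hom^*_{\Der^b(A)}(M^*,N^*)\cong\Hom^*_{\Der^b(B)}(F(M^*),F(N^*))$ intertwine the Hochschild actions via this isomorphism (this is exactly the paper's Theorem~\ref{thm:main}(1), proved there by a diagram chase that needs the associativity of $\otimes^{\mathbb{L}}$, Lemma~\ref{lem:derived-tensor-associative}), and then transfer finite generation and annihilators. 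The one genuine divergence is in part~(a): the paper's (Fg) condition is formulated for \emph{all} pairs of bounded complexes, so its transfer is immediate from essential surjectivity of $F$, the equivalence with the classical formulation on $A/\rad{A}$ being cited from Solberg (\cite[Proposition~10.3]{Sol06}); you instead start from the $A/\rad{A}$ formulation and bootstrap to arbitrary bounded complexes by d\'evissage (Loewy filtrations plus truncation triangles, using that $H$ is noetherian and the connecting maps are $H$-linear). That argument is sound and essentially re-proves the cited equivalence of formulations, so your route is valid but does extra work the paper outsources. Two minor cautions: for an arbitrary $F$, Rickard's theorem only provides a standard equivalence agreeing with $F$ on objects, which suffices for support varieties but should be said (this is the paper's Remark~\ref{standardnotneeded}, including the subtlety about which subalgebra $H'$ one gets); and your reduction of the intertwining property to the identity $X^*\otimes^{\mathbb{L}}_A\eta\otimes^{\mathbb{L}}_AY^*=\phi(\eta)$ still requires the bifunctoriality/associativity of the derived tensor product and the identification $X^*\otimes^{\mathbb{L}}_AY^*\cong B$ in $\Der^b(B^e)$, which is precisely the technical content the paper isolates in Lemma~\ref{lem:derived-tensor-associative} and Theorem~\ref{thm:hh-derived-equivalence}.
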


Using the result in \cite{Ska16} part (a) can also be obtained as follows: The first step is to show that if $A$ satisfies (Fg), then $A$ and $B$ are both Gorenstein. The algebra $A$ is Gorenstein by \cite[Proposition 1.2]{EHSST04}. The property of algebras being Gorenstein is preserved under derived equivalence by combining results of Happel \cite[Lemma~1.5]{Hap91} and Rickard \cite[Theorem~6.4, Proposition~9.1]{Ric89}. Hence, $B$ is also Gorenstein. 
Part (a) of the Theorem now follows by using that any derived equivalence induces a singular equivalence of Morita type with level \cite[Theorem~2.3]{Wan14} and that a singular equivalence of Morita type with level preserves (Fg) provided both algebras are Gorenstein \cite{Ska16}.

The structure of the article is as follows. Section~\ref{derivedequivalences} recalls the relevant parts of the theory of derived equivalences. Section~\ref{supportvarieties} recalls the definition of a support variety for complexes and proves the Theorem. In the last section we give an example illustrating that our result can be applied to show (Fg) for an algebra where previous criteria did not work.
\smallskip

\textbf{Conventions and Notation:} Throughout the article let $k$ be a field. We assume all $k$-algebras to be finite dimensional except for the subalgebras of the Hochschild cohomology ring. For a $k$-algebra $A$ we write  $\modu A$ for the category of finitely generated left $A$-modules, and $\proj A$ for the subcategory consisting of projective modules. We write $\Der^b(A)$ for the bounded derived category of $\modu A$ and $\Der^-(A)$ for the derived category of bounded above complexes. We denote the homotopy category of bounded above complexes of projectives by $\Kom^-(\proj A)$ and its subcategory of complexes with bounded homology by $\Kom^{-,b}(\proj A)$. There is a triangle equivalence $N_A\colon \Kom^-(\proj A)\to \Der^-(A)$ which restricts to an equivalence $N_A\colon \Kom^{-,b}(\proj A)\to \Der^b(A)$. For complexes $X^*$ and $Y^*$ of $A$-modules we define $\Hom^*_{\Der^b(A)}(X^*,Y^*):=\bigoplus_{n\in \mathbb{Z}}\Hom_{\Der^b(A)}(X^*,Y^*[n])$ and $\End^*_{\Der^b(A)}(X^*):=\Hom^*_{\Der^b(A)}(X^*,X^*)$.
Furthermore we denote by $A^e:=A\otimes_k A^{\op}$ the enveloping algebra of $A$. The Hochschild cohomology ring is defined as $\HH^*(A):=\End^*_{\Der^b(A^e)}(A)$. If $\charac k\neq 2$, let $\HH^{\ev}(A):=\HH^{2*}(A)$, if $\charac k=2$, let $\HH^{\ev}(A):=\HH^*(A)$.

\section{Derived equivalences}\label{derivedequivalences}

In this section, we state the definitions and results we need regarding derived categories and derived equivalences.  We recall Rickard's derived analogue of Morita's theorem and as an application the invariance of Hochschild cohomology under derived equivalences.  These results were first proved in \cite{Ric89} and \cite{Ric91}.  For an overview of Morita theory for derived categories, see the books \cite{Zim14} and \cite{KZ98}.  Furthermore, we show that in our setting, the derived tensor product is associative.  This is a fact that should be well-known, but for which we could not find a precise reference in the literature.

\begin{defn}
Two $k$-algebras $A$ and $B$ are \emphbf{derived equivalent} if there exists a triangle equivalence $F\colon \Der^b(A)\to \Der^b(B)$.  The functor $F$ is then a \emphbf{derived equivalence} between $A$ and $B$, and it is \emphbf{of standard type} if $F\cong X^*\otimes_A^{\mathbb{L}}-$ for a complex $X^*$ of $A$-$B$-bimodules.
\end{defn}

The following is part of Rickard's celebrated version of Morita's theorem:

\begin{thm}\label{rickardstheorem}
Let $A$ and $B$ be $k$-algebras. Let $F\colon \Der^b(A)\to \Der^b(B)$ be an equivalence. Then there is an equivalence of standard type $X^*\otimes_A^{\mathbb{L}}-\colon \Der^b(A)\to \Der^b(B)$ such that 
$F(M^*)\cong X^*\otimes_A^{\mathbb{L}} M^*$ for every $M^*\in \Der^b(A)$. Its quasi-inverse is given by the equivalence of standard type $Y^*\otimes_B^{\mathbb{L}}-$, where $Y^*=\mathbb{R}\Hom_A(X^*,A)$.
\end{thm}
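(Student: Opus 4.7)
The plan is to construct the standard-type equivalence via Rickard's tilting complex approach. First, choose a quasi-inverse $G \colon \Der^b(B) \to \Der^b(A)$ of $F$ and set $T^* := G(B)$. Since $G$ is a triangle equivalence, $T^*$ inherits from $B$ the defining properties of a \emph{tilting complex} for $A$: after replacing $T^*$ within its quasi-isomorphism class we may assume $T^* \in \Kom^b(\proj A)$; it satisfies $\Hom_{\Der^b(A)}(T^*, T^*[n]) = 0$ for all $n \neq 0$; its endomorphism algebra is identified with $B$ via $G$; and $T^*$ generates $\Kom^b(\proj A)$ as a thick subcategory.

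The main technical step is to realise $T^*$ as a complex of bimodules, promoting the right $B$-action from the derived category to a strictly associative chain-level action compatible with the $A$-action. This is the hard part: one must lift the abstract algebra map $B \to \End_{\Der^b(A)}(T^*)$ to an honest chain map. It can be carried out either by Rickard's inductive construction---building the bimodule representative from partial structures on truncations, using the vanishing of positive self-$\Ext$ as the controlling lifting obstruction---or via dg-algebra techniques, taking a semifree resolution over a dg enhancement of $\End^*_{\Der^b(A)}(T^*)$ (which is formal, quasi-isomorphic to $B$ concentrated in degree zero) and truncating. A suitable $A$-dualisation then yields a bounded complex $X^*$ of $A$-$B$-bimodules (right $A$-action and left $B$-action, as is required for $X^* \otimes_A^{\mathbb{L}} -$ to land in $\Der^b(B)$), equipped with a canonical isomorphism $X^* \otimes_A^{\mathbb{L}} T^* \cong B$ in $\Der^b(B)$.

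Define $\Phi := X^* \otimes_A^{\mathbb{L}} - \colon \Der^b(A) \to \Der^b(B)$. The canonical isomorphism above shows that $\Phi(T^*) \cong B \cong F(T^*)$. To upgrade this to a natural isomorphism $\Phi \cong F$, one extends both functors to the unbounded derived categories $\Der(A) \to \Der(B)$ (for $\Phi$ this is automatic from the formula; for $F$, Rickard's analysis shows such a coproduct-preserving extension exists). Since $T^*$ is a compact generator of $\Der(A)$, a standard d\'evissage argument---any two coproduct-preserving triangle functors agreeing on a compact generator are naturally isomorphic---yields $\Phi \cong F$ on $\Der(A)$, which restricts to $\Der^b$. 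In particular $\Phi$ is a triangle equivalence.

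For the quasi-inverse, set $Y^* := \mathbb{R}\Hom_A(X^*, A)$, a complex of $B$-$A$-bimodules. The tensor-Hom adjunction together with the tilting properties of $X^*$ from both sides (the second following from $\Phi$ being an equivalence) identify $Y^* \otimes_B^{\mathbb{L}} -$ with $\mathbb{R}\Hom_A(X^*, -)$ as functors $\Der^b(B) \to \Der^b(A)$, and this is quasi-inverse to $\Phi$. The decisive step throughout is the bimodule lift in the second paragraph; once $X^*$ is in hand, the remaining verifications are formal consequences of the tilting identities.
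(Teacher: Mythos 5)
The paper does not actually prove this statement: it is quoted directly as Rickard's theorem, with a citation to \cite{Ric89} and \cite{Ric91}, so there is no ``paper's proof'' to compare against. Evaluating your sketch on its own merits: the overall architecture (pass to $T^* := G(B)$, recognise it as a tilting complex, lift $T^*$ to a complex of bimodules, and then use the two-sided tilting complex $X^*$ to define the standard functor) is indeed Rickard's strategy, and you are right that the bimodule lift is the heart of the matter.

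However, the d\'evissage step contains a genuine error. The claim that ``any two coproduct-preserving triangle functors agreeing on a compact generator are naturally isomorphic'' is false. For a counterexample, take a non-inner automorphism $\sigma$ of $A$ and compare $\mathrm{id}_{\Der(A)}$ with $-\otimes^{\mathbb{L}}_A {}_1A_\sigma$: both preserve coproducts and both send the compact generator $A$ to an object isomorphic to $A$, but they are not naturally (indeed, not even objectwise) isomorphic, since ${}_\sigma M\not\cong M$ for some $M$ when $\sigma$ is not inner. The correct d\'evissage requires that the two functors agree \emph{as functors}, coherently, on the full subcategory $\add\{T^*[n] : n\in\mathbb{Z}\}$ --- i.e.\ on morphisms, not merely on the object $T^*$ --- and arranging this is exactly what makes Rickard's bimodule lift delicate; you cannot defer it to the lift and then recover it for free from agreement on one object. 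Separately, note that you claim to obtain a natural isomorphism $\Phi\cong F$, which is strictly stronger than the theorem asserts: the statement only gives an objectwise isomorphism $F(M^*)\cong X^*\otimes^{\mathbb{L}}_A M^*$, and whether every derived equivalence between finite-dimensional algebras is \emph{naturally} isomorphic to a standard one is a well-known open problem, so this stronger conclusion should not be expected to follow from a soft argument. Even for the objectwise statement, the subcategory of objects on which $F$ and $\Phi$ happen to agree is not obviously triangulated (cones are not determined by objectwise agreement of the other two vertices), so some coherence on a generating subcategory is again indispensable.
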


This theorem allows us to always work with equivalences of standard type instead of general derived equivalences. Rickard also proved that such an equivalence gives a standard equivalence of the corresponding enveloping algebras, and hence an isomorphism of Hochschild cohomology rings:

\begin{thm}
\label{thm:hh-derived-equivalence}
Let $A$ and $B$ be $k$-algebras. Suppose there is a derived
equivalence of standard type $X^*\otimes^\mathbb{L}_A-\colon
\Der^b(A)\to \Der^b(B)$, and let $Y^*\otimes^\mathbb{L}_B-$ be an
inverse equivalence. Then we have the following.
\begin{enumerate}
\item The functor $Y^*\otimes^{\mathbb{L}}_B (- \otimes^{\mathbb{L}}_B
X^*)\colon \Der^b(B^e)\to \Der^b(A^e)$ is a derived equivalence of
standard type, and there is an isomorphism $\psi \colon
Y^*\otimes^{\mathbb{L}}_B X^* \to A$ in $\Der^b(A^e)$.
\item Let $\psi_* \colon \End_{\Der^b(A^e)}^*(Y^*
\otimes^{\mathbb{L}}_B X^*) \to \HH^*(A)$, given by $\eta \mapsto
\psi\eta\psi^{-1}$, be the isomorphism of endomorphism rings induced
by $\psi$.  Then the map 
\[\psi_*\circ (Y^*\otimes^{\mathbb{L}}_B
(-\otimes^{\mathbb{L}}_BX^*))\colon \HH^*(B)\to \HH^*(A)\]
is an
isomorphism of graded $k$-algebras.
\end{enumerate}
\end{thm}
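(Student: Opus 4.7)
For part (1), the strategy is to exhibit an explicit quasi-inverse and reduce all compositions to bimodule tensor products of $X^*$ and $Y^*$ using the associativity of the derived tensor product proved earlier in this section. Concretely, I would consider the candidate inverse $\Psi := X^*\otimes^{\mathbb{L}}_A(-\otimes^{\mathbb{L}}_AY^*)\colon \Der^b(A^e)\to\Der^b(B^e)$. Writing $\Phi := Y^*\otimes^{\mathbb{L}}_B(-\otimes^{\mathbb{L}}_BX^*)$ and using associativity, one rearranges $\Psi\Phi(M)$ into $(X^*\otimes^{\mathbb{L}}_AY^*)\otimes^{\mathbb{L}}_BM\otimes^{\mathbb{L}}_B(X^*\otimes^{\mathbb{L}}_AY^*)$, and symmetrically for $\Phi\Psi$.

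The key technical step is to produce $A^e$- and $B^e$-bimodule isomorphisms $Y^*\otimes^{\mathbb{L}}_BX^*\cong A$ and $X^*\otimes^{\mathbb{L}}_AY^*\cong B$. That $F=X^*\otimes^{\mathbb{L}}_A-$ and $G=Y^*\otimes^{\mathbb{L}}_B-$ are mutually inverse on one-sided modules immediately gives $GF(A)\cong A$, hence $Y^*\otimes^{\mathbb{L}}_BX^*\cong A$ as left $A$-modules. To upgrade this to a bimodule isomorphism $\psi$, I would use that $F$ and $G$ being of standard type extend, by tensoring on the outer factor only, to functors on bimodule derived categories; compatibility of the remaining $A$-action then comes from naturality applied to the right multiplication maps $A\to A$. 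Equivalently, one evaluates at $A$ regarded as the regular $A$-$A$-bimodule and observes that the outer right $A$-action is preserved by functoriality of $F$ and $G$. Once $\psi$ and its $B$-side counterpart are in place, the rearrangement above shows $\Psi\Phi\cong\id$ and $\Phi\Psi\cong\id$, so $\Phi$ is an equivalence; it is of standard type because it is given by tensoring on both sides against bimodules $X^*$ and $Y^*$.

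For part (2), any triangle equivalence induces a graded $k$-algebra isomorphism on graded endomorphism rings. Applying $\Phi$ to $B\in\Der^b(B^e)$, together with the canonical identification $\Phi(B)=Y^*\otimes^{\mathbb{L}}_BB\otimes^{\mathbb{L}}_BX^*\cong Y^*\otimes^{\mathbb{L}}_BX^*$, yields a graded $k$-algebra isomorphism $\HH^*(B)=\End^*_{\Der^b(B^e)}(B)\to\End^*_{\Der^b(A^e)}(Y^*\otimes^{\mathbb{L}}_BX^*)$. Conjugation by $\psi$ then supplies the graded algebra isomorphism $\psi_*$ onto $\HH^*(A)$, and composing the two gives the desired isomorphism.

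The main obstacle is the bimodule-level argument in part (1): although the one-sided isomorphism $Y^*\otimes^{\mathbb{L}}_BX^*\cong A$ is automatic from $GF\cong\id$, ensuring that the isomorphism is $A^e$-linear (i.e.\ simultaneously respects both sides of the bimodule structure) requires careful use of the standard-type hypothesis together with functoriality. The associativity manipulations on derived tensor products of bimodules also depend on the precise form established earlier in this section; it is these associativity and bimodule-compatibility issues that the authors flagged as difficult to locate cleanly in the literature.
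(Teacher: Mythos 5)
The paper does not give a proof of this theorem; it simply recalls it as a result of Rickard (\cite{Ric89}, \cite{Ric91}). So there is no in-paper proof to compare against, and what matters is whether your sketch would stand on its own. It does not: the step you yourself flag as ``the main obstacle''---upgrading the one-sided isomorphism $Y^*\otimes^{\mathbb{L}}_B X^*\cong A$ in $\Der^b(A)$ to an isomorphism $\psi$ in $\Der^b(A^e)$---is exactly the hard content of Rickard's theorem, and the naturality argument you propose does not bridge it. Evaluating the natural isomorphism $GF\cong\id_{\Der^b(A)}$ at the right-multiplication maps $r_a\colon A\to A$ only shows that the isomorphism $\mu_A\colon Y^*\otimes^{\mathbb{L}}_B X^*\to A$ in $\Der^b(A)$ commutes, \emph{as a morphism of the derived category of left modules}, with the morphisms induced by right multiplication. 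That is strictly weaker than saying $\mu_A$ is (or lifts to) a morphism in $\Der^b(A^e)$. A morphism in $\Der^b(A)$ is only a homotopy class of chain maps; commuting with each $r_a$ up to homotopy does not give a single $A^e$-linear representative, because no coherent choice of homotopies is produced. (This ``external vs.\ internal'' gap is the precise reason the restriction functor $\Der^b(A^e)\to\Der^b(A)$ is neither full nor faithful; for instance $\End^*_{\Der^b(A^e)}(A)=\HH^*(A)$ is generally huge while $\End^*_{\Der^b(A)}(A)=A$ is concentrated in degree zero.)

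The way around this in Rickard's work, which the paper is implicitly invoking via its choice $Y^*=\mathbb{R}\Hom_A(X^*,A)$ in Theorem~\ref{rickardstheorem}, is to construct a candidate morphism \emph{directly at the bimodule level}---for instance the evaluation map $\mathbb{R}\Hom_A(X^*,A)\otimes^{\mathbb{L}}_B X^*\to A$, which is manifestly $A^e$-linear---and then verify it is a quasi-isomorphism after forgetting the bimodule structure, where the one-sided hypotheses can be used. Your sketch does not use the specific form of $Y^*$ at all, and consequently has no way to manufacture the required $A^e$-morphism. Part~(2) of your argument is fine once part~(1) is granted, and your use of Lemma~\ref{lem:derived-tensor-associative} to rearrange the compositions is appropriate, but the proposal as written does not close the central gap.
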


The following lemma states that when we are working with finite-dimensional $k$-algebras, the derived tensor product $\otimes^{\mathbb{L}}$ is associative (up to natural isomorphism).

\begin{lem}
\label{lem:derived-tensor-associative}
Let $A$, $B$, $C$ and $D$ be $k$-algebras, and let
${}_A L^*_B$, ${}_B M^*_C$ and ${}_C N^*_D$ be bounded complexes of bimodules. Then there is an isomorphism $(L^* \otimes^{\mathbb{L}}_B M^*)
\otimes^{\mathbb{L}}_C N^* \cong L^* \otimes^{\mathbb{L}}_B (M^*
\otimes^{\mathbb{L}}_C N^*)$ which is natural in all three factors.
More precisely, the following diagram of functors commutes up to natural isomorphism:
\[
\begin{tikzcd}[column sep=7em]
\Der^{-}(A\otimes_k C^{\op})
\ar{dr}{-\otimes^{\mathbb{L}}_C N^*}
&
\Der^{-}(B\otimes_k C^{\op})
\arrow{l}[swap]{L^* \otimes^{\mathbb{L}}_B-}
\arrow{r}{-\otimes^{\mathbb{L}}_C N^*}
&
\Der^{-}(B\otimes_k D^{\op})
\arrow{dl}[swap]{L^* \otimes^{\mathbb{L}}_B-}
\\
\Der^{-}(A\otimes_k B^{\op})
\arrow{u}{-\otimes^{\mathbb{L}}_B M^*}
\arrow{r}[swap]{-\otimes^{\mathbb{L}}_B(M^*\otimes^{\mathbb{L}}_C N^*)}
&
\Der^{-}(A\otimes_k D^{op})
&
\Der^{-}(C\otimes_k D^{\op})
\arrow{u}[swap]{M^* \otimes^{\mathbb{L}}_C-}
\arrow{l}{(L^* \otimes^{\mathbb{L}}_BM^*)\otimes^{\mathbb{L}}_C-}
\end{tikzcd}
\]
\end{lem}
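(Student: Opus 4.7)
The plan is to prove associativity by choosing bounded-above projective resolutions of $L^*$, $M^*$, $N^*$ over their respective enveloping algebras and then reducing to the classical associativity of the ordinary tensor product of complexes. Concretely, since each of $L^*$, $M^*$, $N^*$ is bounded, we can pick quasi-isomorphisms $P^* \to L^*$, $Q^* \to M^*$, $R^* \to N^*$ from bounded-above complexes of projective modules over $A \otimes_k B^{\op}$, $B \otimes_k C^{\op}$, $C \otimes_k D^{\op}$ respectively.

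The crucial observation, which requires that we work over a field, is that $A \otimes_k B^{\op}$ is free as a left $A$-module (on a $k$-basis of $B^{\op}$) and as a right $B$-module (on a $k$-basis of $A$); likewise for the other enveloping algebras. Hence every projective $A\otimes_k B^{\op}$-module is projective, and in particular flat, when restricted to either side. So the terms of $P^*$ are projective as right $B$-modules, those of $Q^*$ are projective as left $B$-modules and as right $C$-modules, and those of $R^*$ are projective as left $C$-modules.

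With these resolutions in hand we can compute every derived tensor product appearing in the diagram as an ordinary tensor product of complexes. For instance, $L^* \otimes^{\mathbb{L}}_B M^* \cong P^* \otimes_B Q^*$, and the resulting bounded-above complex of $A$-$C$-bimodules still has terms that are projective as right $C$-modules, since writing each $Q^j$ as a summand of a free $B \otimes_k C^{\op}$-module exhibits $P^i \otimes_B Q^j$ as a summand of a free right $C$-module. Therefore
\[
(L^* \otimes^{\mathbb{L}}_B M^*) \otimes^{\mathbb{L}}_C N^* \;\cong\; (P^* \otimes_B Q^*) \otimes_C R^*,
\]
and analogously $L^* \otimes^{\mathbb{L}}_B (M^* \otimes^{\mathbb{L}}_C N^*) \cong P^* \otimes_B (Q^* \otimes_C R^*)$. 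The two right-hand sides are canonically identified by classical associativity of the tensor product, applied levelwise and carried through totalisation. Letting one of the arguments $L^*$, $M^*$, $N^*$ vary through an arbitrary bounded-above bimodule complex, the same argument yields naturality in each factor and the commutativity of the stated diagram.

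The main obstacle is the bookkeeping rather than any deep new ingredient. One has to verify (a) that the ordinary tensor product $P^* \otimes_B Q^*$ genuinely represents the derived tensor product $L^* \otimes^{\mathbb{L}}_B M^*$, which is where the two-sided flatness of the bimodule resolutions is used, and (b) that the canonical associativity isomorphism is independent of the chosen resolutions up to homotopy, so that it descends to a well-defined natural isomorphism of functors between the derived categories. Once these verifications are in place, commutativity of the triangle and the two squares in the diagram is a direct transcription of the analogous identities for ordinary tensor products of bounded-above complexes of flat bimodules.
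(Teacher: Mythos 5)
Your proposal is correct and follows essentially the same route as the paper: replace each bimodule complex by a projective bimodule resolution, use that over a field projective bimodules are projective (hence flat) on either side, so that ordinary tensor products of these resolutions compute the derived tensor products and remain complexes of projective bimodules, and then conclude from ordinary associativity. The only minor differences are that the paper establishes the acyclicity-preservation step via a $\Hom_k(-,k)$-duality and Hom-tensor adjunction argument and handles naturality by invoking bifunctoriality of $-\otimes^{\mathbb{L}}_B-$, whereas you appeal directly to one-sided flatness and leave the resolution-independence/naturality verification as a stated (standard) check.
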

\begin{proof}
First recall that $-\otimes^{\mathbb{L}}_B-\colon \Der^-(A\otimes_k B^{\op})\times \Der^-(B\otimes_k C^{\op})\to \Der^-(A\otimes_k C^{\op})$ is a bifunctor. This is noted as Exercise~10.6.2 in \cite{Wei94}. For a sketch of proof in a slightly different situation we refer the reader to \cite[Appendix~B]{HTT08}. Using this fact we can replace every module by its projective bimodule resolution:

Consider the derived tensor functor $L^* \otimes^{\mathbb{L}}_B -$.
Let $P_{L^*}$ be a projective bimodule resolution of $L^*$.  Since there is
a quasi-isomorphism $P_{L^*} \to L^*$ and $- \otimes^{\mathbb{L}}_B -$ is a
bifunctor, we have a natural isomorphism $L^* \otimes^{\mathbb{L}}_B -
\cong P_{L^*} \otimes^{\mathbb{L}}_B -$. 

Next we claim that if $P^*$ is a bounded above complex of projective $A$-$B$-bimodules, then $P^*\otimes^{\mathbb{L}}_B- = P^*\otimes_B- \colon \Der^-(B\otimes_k C^{\op})\to \Der^-(A\otimes_k C^{\op})$. It suffices to show that the functor $P^*\otimes_B-$ preserves acyclic complexes. Let $Q^*$ be an acyclic complex of $B$-$C$-bimodules. Using the duality $\Hom_{k}(-,k)$ and the $\Hom$-tensor adjunction we obtain an isomorphism
\[
\Homcom_{k}(P^*\otimes_BQ^*, k)\cong \Homcom_{B^{\op}}(P^*, \Hom_{k}(Q^*,k)),
\]
where $\Homcom$ denotes the $\Hom$ complex: Recall that for two complexes $(M^*,d_M)$, $(N^*,d_N)$ of $A$-modules, $\Homcom(M^*,N^*)$ is the complex with $n$-th component $\prod_{i\in \mathbb{Z}}\Hom(M^i, N^{i+n})$ and differential given by $\partial f=d_N\circ f-(-1)^nf\circ d_M$ for a homogeneous $f$ of degree $n$.
Taking homology it follows that
\begin{align*}
H_n\big(\Homcom^{*}_{k}(P^*\otimes_BQ^*, k)\big) &=
H_n\big(\Homcom_{B^{\op}}(P^*, \Hom_{k}(Q^*,k))\big) \\
&\cong \Hom_{\Kom^{-}(B^{\op})}(P^*, \Hom_{k}(Q^*,k)[n])=0
\end{align*}
for any $n$.  Note that the last Hom space is zero because $P^*$ is a complex of projectives and $\Hom_{k}(Q^*,k)[n]$ is an acyclic complex.
This implies that the complex $P^*\otimes_BQ^*$ is acyclic.  This finishes the proof of the claim. By combining the two steps, we get a natural isomorphism $L^* \otimes^{\mathbb{L}}_B - \cong
P_{L^*} \otimes_B -$.

We can describe the other functors similarly.  For the functor $(L^*
\otimes^{\mathbb{L}}_BM^*)\otimes^{\mathbb{L}}_C-$, we need a
projective bimodule resolution of $L^* \otimes^{\mathbb{L}}_BM^*$.  We
claim that $P_{L^*} \otimes_B P_{M^*}$ is such a resolution, where
$P_{L^*}$ and $P_{M^*}$ are projective bimodule resolutions of $L^*$
and $M^*$, respectively.  Using the above description of the functor
$L^* \otimes^{\mathbb{L}}_B -$, we have $L^* \otimes^{\mathbb{L}}_B
M^* \cong P_{L^*} \otimes_B M^* \cong P_{L^*} \otimes_B P_{M^*}$.
Since $k$ is a field, every tensor product of projective bimodules is
a projective bimodule.  This means that the complex $P_{L^*} \otimes_B
P_{M^*}$ is a projective bimodule resolution of $L^*\otimes_B^{\mathbb{L}}M^*$.

Now we have shown that we can replace all the complexes in the
diagrams by their projective resolutions and all the derived tensor
functors by ordinary tensor functors, which are associative.
\end{proof}

\section{Support varieties}\label{supportvarieties}

In this section we recall the definition of support varieties of
bounded complexes over a finite dimensional $k$-algebra $A$ using Hochschild cohomology, and we prove the main result as stated in the introduction. 
In the theory of support varieties, it is standard to work with
classical varieties instead of schemes.  Henceforth, we therefore
assume our ground field $k$ to be algebraically closed.

We use a description of support varieties in terms of the derived
category.  This description comes from the as yet unpublished
paper~\cite{BKSS}.  A short summary can be found
in~\cite[Section~10]{Sol06}, from which we recall the definitions and
results we need.

To define support varieties we need to recall the action of $\HH^*(A)$ on $\Hom_{\Der^b(A)}(M^*,N^*)$ for complexes $M^*$ and $N^*$ in $\Der^b(A)$. Define $\varphi_{M^*}:\HH^*(A)\to \End_{\Der^b(A)}^*(M^*)$ as $\varphi_{M^*} = -\otimes^{\mathbb{L}}_AM^*$. This is a homomorphism of graded rings and the left action of $\End_{\Der^b(A)}^*(N^*)$ on $\Hom_{\Der^b(A)}^*(M^*,N^*)$ induces a left action of $\HH^*(A)$ on this space. In order for an algebra $A$ to have a well-behaved support variety theory, the following two conditions need to be satisfied:

\begin{defn}
Let $A$ be a $k$-algebra. Let $H\subseteq \HH^*(A)$ be a subalgebra with $H^0=\HH^0(A)$. Then $A$ is said to \emphbf{satisfy (Fg)} with respect to $H$ if the following two conditions hold:
\begin{enumerate}[{(Fg}1{)}]
\item The algebra $H$ is a noetherian ring.
\item $\Hom_{\Der^b(A)}^*(M^*, N^*)$ is a finitely generated $H$-module for all bounded complexes $M^*$ and $N^*$ of $A$-modules.
\end{enumerate} 
We say that $A$ \emphbf{satisfies (Fg)} if there exists $H$ such that $A$ satisfies (Fg) with respect to $H$.
\end{defn}

There are several different ways to describe the (Fg) condition.  For
an algebra $A$, the following statements are equivalent:
\begin{enumerate}
\item $A$ satisfies (Fg).
\item $A$ satisfies (Fg) with respect to some commutative subalgebra
$H$ of $\HH^*(A)$.
\item $A$ satisfies (Fg) with respect to $\HH^*(A)$.
\item $A$ satisfies (Fg) with respect to $\HH^{\ev}(A)$.
\end{enumerate}
The equivalence of these statements was shown in~\cite[Propositions 5.5--5.7]{Sol06} under the assumption that $H$ is commutative. Note that the proofs therein do not rely on this assumption. Hence, the statements (1)--(4) are equivalent. Furthermore, under the presence of (Fg1) it suffices to check (Fg2)
for the stalk complex $M^*=N^*=A/\rad{A}$, see
\cite[Proposition~10.3]{Sol06}.

\begin{defn}
Let $A$ be a $k$-algebra which satisfies (Fg), and let $H \subseteq
\HH^*(A)$ be a commutative subalgebra of the Hochschild cohomology ring of
$A$ such that $A$ satisfies (Fg) with respect to $H$.  For a pair
$(M^*, N^*)$ of bounded complexes of $A$-modules, the \emphbf{support
  variety} $\mathcal{V}_A^H(M^*, N^*)$ is the maximal ideal spectrum
$\maxSpec(H/\Ann_H(\Hom^*_{\Der^b(A)}(M^*,N^*)))$. Denote
$\mathcal{V}_A^H(M^*,M^*)$ by $\mathcal{V}_A^H(M^*)$.
\end{defn}

If an algebra $A$ satisfies the (Fg) condition with respect to several
different commutative subalgebras $H \subseteq \HH^*(A)$ of the
Hochschild cohomology ring, then the support variety theory for $A$
may depend on the choice of $H$, as the following example illustrates.

\begin{ex}
Let $A=\mathbb{C}[x,y]/(x^2,y^2)$. According to \cite[Proposition 9.1]{ES11b} this algebra satisfies (Fg) (with respect to $H':=\HH^{2*}(A)$). Its Hochschild cohomology ring can be obtained as follows: In \cite{Hol00}, Holm computed the Hochschild cohomology ring of $\mathbb{C}[X]/(X^2)$. He obtained that $\HH^*(\mathbb{C}[X]/X^2)\cong \mathbb{C}[s,t,u]/(s^2, t^2, 2su, ut)$ with $\deg s=0$, $\deg t=1$ and $\deg u=2$. The Hochschild cohomology ring of $A$ is the tensor product of two copies of the Hochschild cohomology ring of $\mathbb{C}[X]/(X^2)$, see e.g. \cite[Corollary 4.8]{BO08}, i.e. $\HH^{2*}(A)\cong \mathbb{C}[s,u,\tilde{s},\tilde{u}]/(s^2, \tilde{s}^2, 2su, 2\tilde{s}\tilde{u})$. Therefore, $\mathcal{V}^{H'}_A(\mathbb{C})\cong \maxSpec \HH^{2*}(A)\cong \mathbb{A}^2$, the complex plane. On the other hand, there is a $\mathbb{Z}/(2)$-action on $\HH^{2*}(A)$ sending $u$ to $-u$ and $\tilde{u}$ to $-\tilde{u}$. The ring of invariants is $H=\mathbb{C}[s,u^2, \tilde{s}, \tilde{u}^2,u\tilde{u}]\subset H'$. The algebra $A$ also satisfies (Fg) with respect to $H$: It is Noetherian and if $I$ is a finite generating set of $\End^*_{\Der^b(A)}(M,M)$ for $\HH^{2*}(A)$, then $I\cup uI\cup \tilde{u}I$ is a finite generating set of $\End^*_{\Der^b(A)}(M,M)$ for $H$. As $H=\HH^{2*}(A)^{\mathbb{Z}/(2)}$ is the ring of invariants, $\mathcal{V}^{H}_A(\mathbb{C})\cong \maxSpec H\cong \mathbb{A}^2/(\mathbb{Z}/(2))$ is a Kleinian singularity. In particular, it is not isomorphic to $\mathbb{A}^2$ as a variety.   
\end{ex}

The definition of support varieties relies on the map $\varphi_{M^*}
\colon \HH^*(A) \to \End_{\Der^b(A)}(M^*)$ for every bounded complex
$M^*$.  In \cite[Subsection~10.1]{Sol06}, Solberg gives an explicit
description of this map instead of using derived functors.  He then
remarks (in Subsection~10.2) that this explicit description coincides
with the definition of $\varphi_{M^*}$ we stated.  There is, however,
no proof of this fact in~\cite[Subsection~10.2]{Sol06}.  In the following remark, we
recall Solberg's explicit description and show that both approaches
define the same map.  For our purposes, it is most convenient to
consider $\varphi_{M^*}$ in terms of the derived tensor functor.  For
performing concrete computations, however, the explicit description
would be more convenient.

\begin{rmk}
\label{rmk:phi}
For giving the explicit description from \cite{Sol06} we need a projective resolution $P^*\colon \cdots \to P_1 \to P_0
\to 0$ of $A$ as an $A^e$-module, together with a quasi-isomorphism
$\varepsilon \colon P^* \to A$.  Consider a homogeneous element of
degree $n$ in $\HH^*(A)$. Such an element can be represented by a
roof
\[
\roof{A}{\varepsilon}{P^*}{\eta}{A[n]}
\]
with the projective resolution $P^*$ on top and the quasi-isomorphism
$\varepsilon$ as the left map.  To get the result of the explicit description we apply $- \otimes_A M^*$ to this roof, and identify $A\otimes_A M^*$
with $M^*$. We obtain the 
the following roof:
\[
\begin{tikzcd}[row sep=0pt]
&& P^* \otimes_A M^* \ar{ld}[swap]{\varepsilon \otimes M^*} \ar{rd}{\eta \otimes M^*} \\
& M^* \cong A \otimes_A M^* &&
A[n] \otimes_A M^* \cong M^*[n]
\end{tikzcd}
\]

This definition relies on the fact that the map $\varepsilon \otimes
M^*$ is a quasi-isomorphism.  We show this now.
Since $k$ is a field, projective $A$-bimodules are projective as one
sided $A$-modules. This implies that all the kernels of the sequence
$\cdots \to P_1 \to P_0 \to A\to 0$ are projective as right
$A$-modules. These are the same modules as the boundaries of the
complex $P^*$. The assumptions of the K\"unneth formula
\cite[Theorem~10.81]{Rot09} are therefore satisfied for the
complex $P^* \otimes_A M^*$. They are also satisfied for the complex
$A \otimes_A M^*$. From this we obtain the vertical isomorphisms in
the following commutative diagram.
\[
\begin{tikzcd}
H_n(P^* \otimes_A M^*) \arrow{r}{H_n(\varepsilon \otimes M^*)} &
H_n(A \otimes_A M^*) \\
\bigoplus\limits_{p+q=n}H_p(P^*)\otimes_A H_q(M^*)
\arrow{u}{\cong} \arrow{r}{\cong} &
\bigoplus\limits_{p+q=n}H_p(A)\otimes_A H_q(M^*)
\arrow{u}{\cong}
\end{tikzcd}
\]
This shows that the map $\varepsilon \otimes M^*$ is a quasi-isomorphism.

The following commutative diagram, where the bottom horizontal arrow is given by the explicit description, shows that the explicit description coincides with the description in terms of the derived tensor product $\varphi_{M^*}=-\otimes^{\mathbb{L}}M^*$:
\[
\begin{tikzcd}
\End^*_{\Kom^{-,b}(\proj A^e)}(P^*)\arrow{d}{N_{A^e}}\arrow{r}{-\otimes_A M} &\End_{\Kom^{-,b}(\proj A)}^*(P^*\otimes_A M)\arrow{d}{N_A}\\
\End^*_{\Der^b(A^e)}(A)\arrow{r} &\End_{\Der^b(A)}^*(M)
\end{tikzcd}
\] 
\end{rmk}

Now we are ready to prove our main result.

\begin{thm}
\label{thm:main}
Let $A$ and $B$ be derived equivalent $k$-algebras, and let ${}_B
X^*_A$ be a bimodule complex inducing a derived equivalence of
standard type $X^* \otimes^{\mathbb{L}}_A - \colon \Der^b(A) \to
\Der^b(B)$ with quasi-inverse given by the bimodule complex ${}_A
Y^*_B$.  Let $f$ be the inverse of the isomorphism $\psi_*\circ
(Y^*\otimes^{\mathbb{L}}_B (-\otimes^{\mathbb{L}}_BX^*))\colon
\HH^*(B)\to \HH^*(A)$ given in
Theorem~\ref{thm:hh-derived-equivalence}.  Then the following
statements hold:
\begin{enumerate}
\item For any bounded complex $S^*$ of $A$-modules, there is a commutative diagram
\[
\begin{tikzcd}[column sep=huge]
\HH^*(A)
\arrow{r}{\varphi_{S^*}}
\arrow{d}{f}[swap]{\cong}
&
\End_{\Der^b(A)}^*(S^*)
\arrow{d}{\cong}[swap]{X^* \otimes^{\mathbb{L}}_A -}
\\
\HH^*(B)
\arrow{r}{\varphi_{X^* \otimes^{\mathbb{L}}_A S^*}}
&
\End_{\Der^b(B)}^*(X^* \otimes^{\mathbb{L}}_A S^*)
\end{tikzcd}
\]
of graded rings, where the vertical maps are isomorphisms.
\item Let $H$ be a subalgebra of $\HH^*(A)$. Let $H':= f(H)$. Then $A$
satisfies (Fg) with respect to $H$ if and only if $B$ satisfies (Fg)
with respect to $H'$. In particular, $A$ satisfies (Fg) if and only if
$B$ satisfies (Fg).
\item Assume additionally that $H$ is commutative. Then, for bounded complexes $M^*$ and $N^*$ of $A$-modules, there is an isomorphism of varieties:
\[\mathcal{V}_A^H(M^*,N^*)\cong \mathcal{V}_B^{H'}(X^*\otimes^{\mathbb{L}}_A M^*, X^*\otimes^{\mathbb{L}}_A N^*)\]
\end{enumerate}
\end{thm}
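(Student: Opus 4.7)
The plan is to prove the three parts in order, with part (1) carrying the main technical content and parts (2)--(3) following by transport of structure along the derived equivalence $X^* \otimes^{\mathbb{L}}_A -$.

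For part (1), the right-hand vertical map is a graded ring isomorphism because $X^* \otimes^{\mathbb{L}}_A -$ is a triangle equivalence, so the only content is commutativity of the square. Given $\alpha \in \HH^*(A) = \End^*_{\Der^b(A^e)}(A)$, the definition of $f$ yields
\[
\alpha = \psi \circ (Y^* \otimes^{\mathbb{L}}_B f(\alpha) \otimes^{\mathbb{L}}_B X^*) \circ \psi^{-1}
\]
in $\End^*_{\Der^b(A^e)}(A)$. I would apply the functor $X^* \otimes^{\mathbb{L}}_A - \otimes^{\mathbb{L}}_A S^*$ to both sides, using Lemma \ref{lem:derived-tensor-associative} to regroup iterated derived tensor products freely. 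The middle factor becomes $(X^* \otimes^{\mathbb{L}}_A Y^*) \otimes^{\mathbb{L}}_B f(\alpha) \otimes^{\mathbb{L}}_B (X^* \otimes^{\mathbb{L}}_A S^*)$; using the counit isomorphism $\phi \colon X^* \otimes^{\mathbb{L}}_A Y^* \to B$ in $\Der^b(B^e)$, available because $X^* \otimes^{\mathbb{L}}_A -$ and $Y^* \otimes^{\mathbb{L}}_B -$ are quasi-inverse equivalences (apply Theorem \ref{thm:hh-derived-equivalence} in the other direction), and the triangle identity relating $\psi$ and $\phi$, this collapses to $f(\alpha) \otimes^{\mathbb{L}}_B (X^* \otimes^{\mathbb{L}}_A S^*) = \varphi_{X^* \otimes^{\mathbb{L}}_A S^*}(f(\alpha))$, as required.

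For part (2), $H$ is noetherian if and only if $H' = f(H)$ is noetherian because $f|_H \colon H \to H'$ is a ring isomorphism, giving (Fg1). For (Fg2), the equivalence $X^* \otimes^{\mathbb{L}}_A -$ provides a $k$-linear isomorphism $\Hom^*_{\Der^b(A)}(M^*, N^*) \cong \Hom^*_{\Der^b(B)}(X^* \otimes^{\mathbb{L}}_A M^*, X^* \otimes^{\mathbb{L}}_A N^*)$ for all bounded complexes $M^*, N^*$; part (1) applied with $S^* = N^*$ shows that this isomorphism intertwines the $H$-action on the source with the $H'$-action on the target along $f$, since the module structure on Hom spaces is induced from composition with $\varphi_{N^*}$ and $\varphi_{X^* \otimes^{\mathbb{L}}_A N^*}$ respectively. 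Since every bounded complex of $B$-modules is isomorphic to $X^* \otimes^{\mathbb{L}}_A M^*$ for some bounded complex $M^*$ of $A$-modules, this yields (Fg2) for $B$ with respect to $H'$. The ``in particular'' assertion follows by taking $H = \HH^*(A)$ (or $\HH^{\ev}(A)$) and using the equivalence of the four characterisations of (Fg) recalled after the definition, since the graded ring isomorphism $f$ carries these distinguished subalgebras onto each other.

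For part (3), commutativity of $H$ transfers to $H' = f(H)$ so the right-hand variety is well defined. The $H$-linear isomorphism of Hom spaces from part (2) shows that under $f$ the annihilator $\Ann_H(\Hom^*_{\Der^b(A)}(M^*, N^*))$ corresponds to $\Ann_{H'}(\Hom^*_{\Der^b(B)}(X^* \otimes^{\mathbb{L}}_A M^*, X^* \otimes^{\mathbb{L}}_A N^*))$, so $f$ descends to a $k$-algebra isomorphism of the quotients, and applying $\maxSpec$ gives the claimed isomorphism of varieties. The main obstacle is the triangle-identity verification in part (1): making precise the fact that conjugation by $X^* \otimes^{\mathbb{L}}_A \psi^{\pm 1} \otimes^{\mathbb{L}}_A S^*$, together with the identification $\phi \colon X^* \otimes^{\mathbb{L}}_A Y^* \cong B$, collapses to the identity after the various associativity and unit isomorphisms. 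This is routine once Lemma \ref{lem:derived-tensor-associative} lets one replace the complexes by projective bimodule resolutions and compute with ordinary tensor products, but requires that $\psi$ and $\phi$ be chosen compatibly as unit and counit of the same adjoint equivalence for the cancellation to go through.
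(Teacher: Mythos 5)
Your argument for parts (2) and (3) is essentially the paper's: the upper square from part (1) intertwines the $H$-action on $\Hom^*_{\Der^b(A)}(M^*,N^*)$ with the $H'$-action on $\Hom^*_{\Der^b(B)}(X^*\otimes^{\mathbb{L}}_AM^*, X^*\otimes^{\mathbb{L}}_AN^*)$, and then finite generation and annihilators transfer directly.

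For part (1), however, you take a genuinely different and more complicated route than the paper. You conjugate the defining identity for $f$ by $X^*\otimes^{\mathbb{L}}_A-\otimes^{\mathbb{L}}_AS^*$ and then try to cancel $X^*\otimes^{\mathbb{L}}_A\psi$ against a counit $\phi\colon X^*\otimes^{\mathbb{L}}_AY^*\to B$, which (as you correctly flag at the end) requires the triangle identity $X^*\otimes^{\mathbb{L}}_A\psi = \phi\otimes^{\mathbb{L}}_BX^*$ up to the unit isomorphisms. The hypothesis only hands you $\psi$; the $\phi$ produced by applying Theorem~\ref{thm:hh-derived-equivalence} with $A$ and $B$ swapped has no a priori compatibility with $\psi$, so you would need an extra step: replace $\phi$ by the unique natural isomorphism making $(\psi,\phi)$ into the unit/counit of an adjoint equivalence. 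That step is true and standard, but it is additional work your sketch does not carry out. The paper avoids the issue entirely by decomposing the square through the intermediate row $\End^*_{\Der^b(A^e)}(Y^*\otimes^{\mathbb{L}}_BX^*)\to\End^*_{\Der^b(A)}(Y^*\otimes^{\mathbb{L}}_BX^*\otimes^{\mathbb{L}}_AS^*)$: the bottom rectangle commutes by associativity (Lemma~\ref{lem:derived-tensor-associative}), the top rectangle by a direct computation with $\psi_*$ and $(\psi\otimes^{\mathbb{L}}_AS^*)_*$, and the right-hand triangle by bifunctoriality of $\otimes^{\mathbb{L}}$ alone (namely $(\psi\otimes^{\mathbb{L}}_AS^*[n])\circ(Y^*\otimes^{\mathbb{L}}_BX^*\otimes^{\mathbb{L}}_A\alpha) = \alpha\circ(\psi\otimes^{\mathbb{L}}_AS^*)$), so no counit and no triangle identity are ever invoked. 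The paper's route is preferable here precisely because it is faithful to the data the theorem actually supplies. If you want to keep your route, you should add the one-sentence lemma that $\psi$ can be completed to an adjoint equivalence and note that the final identity is independent of the chosen completion.
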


\begin{proof}
We first prove part~(1) by constructing the following
commutative diagram of graded rings:
\[
\begin{tikzcd}[column sep=huge]
\HH^*(A)
\arrow{r}{\varphi_{S^*}}
\arrow [rounded corners, to path={ -- ([xshift=-9ex]\tikztostart.west) -- ([xshift=-9ex]\tikztotarget.west) \tikztonodes -- (\tikztotarget)}]{dd}[swap]{f} &
\End_{\Der^b(A)}^*(S^*)
\arrow [rounded corners, to path={ -- ([xshift=12.63ex]\tikztostart.east) -- ([xshift=9ex]\tikztotarget.east) \tikztonodes -- (\tikztotarget)}]{dd}{X^* \otimes^{\mathbb{L}}_A -}
\\
\End_{\Der^b(A^e)}^*(Y^* \otimes^{\mathbb{L}}_B X^*)
\arrow{r}{-\otimes^\mathbb{L}_A S^*}
\arrow{u}{\cong}[swap]{\psi_*}
&
\End_{\Der^b(A)}^*(Y^* \otimes^{\mathbb{L}}_B X^* \otimes^{\mathbb{L}}_A S^*)
\arrow{u}[swap]{(\psi \otimes^{\mathbb{L}}_A S^*)_*}{\cong}
\\
\HH^*(B)
\arrow{r}[swap]{\varphi_{X^* \otimes^{\mathbb{L}}_A S^*}}
\arrow{u}{\cong}[swap]{Y^* \otimes^{\mathbb{L}}_B (- \otimes^{\mathbb{L}}_B X^*)}
&
\End_{\Der^b(B)}^*(X^* \otimes^{\mathbb{L}}_A S^*)
\arrow{u}[swap]{Y^* \otimes^{\mathbb{L}}_B -}{\cong}
\end{tikzcd}
\]
The associativity of the tensor product
(Lemma~\ref{lem:derived-tensor-associative}) yields the commutativity
of the lower part of the diagram.  The upper square commutes since for $\eta\in \End^*_{\Der^b(A)}(Y^*\otimes^{\mathbb{L}}_B X^*)$ there is the following chain of equalities:
\[
(\varphi_{S^*} \psi_*)(\eta)
= (\psi \eta \psi^{-1}) \otimes^{\mathbb{L}}_A S^*
= (\psi \otimes^{\mathbb{L}}_A S^*) (\eta \otimes^{\mathbb{L}}_A S^*) (\psi \otimes^{\mathbb{L}}_A S^*)^{-1}
= (\psi \otimes^{\mathbb{L}}_A S^*)_* ((- \otimes^{\mathbb{L}}_A S^*)(\eta)).
\]
To see that the triangle on the right part of the diagram commutes, start with a map
$\alpha \in \End^*_{\Der^b(A)}(S^*)$.  Applying the maps $X^* \otimes^{\mathbb{L}}_A -$
and $Y^* \otimes^{\mathbb{L}}_B -$ gives the map
$Y^* \otimes^{\mathbb{L}}_B X^* \otimes^{\mathbb{L}}_A \alpha$,
and then by applying the map $(\psi \otimes^{\mathbb{L}}_A S^*)_*$,
we get back to the original map $\alpha$.

We now prove parts (2) and (3).  Consider the following diagram:
\tikzexternalenable
\tikzsetnextfilename{diagram-mainthm}
\[
\begin{tikzpicture}
\node {
\begin{tikzcd}[column sep=huge]
H\otimes_k \Hom^*_{\Der^b(A)}(M^*,N^*)
\arrow{r}{f \otimes_k (X^* \otimes^{\mathbb{L}}_A -)}
\arrow{d}{\varphi_{N^*} \otimes_k \id} &
H'\otimes_k \Hom^*_{\Der^b(B)}(X^*\otimes^{\mathbb{L}}_A M^*, X^*\otimes^{\mathbb{L}}_A N^*)
\arrow{d}{\varphi_{(X^* \otimes^{\mathbb{L}}_A N^*)} \otimes_k \id} \\
\framebox[0.95\width]{$\begin{array}{c}
\End^*_{\Der^b(A)}(N^*) \\
\otimes_k \\
\Hom^*_{\Der^b(A)}(M^*,N^*)
\end{array}$}
\arrow{r}{(X^* \otimes^{\mathbb{L}}_A -) \otimes_k (X^* \otimes^{\mathbb{L}}_A -)}
\arrow{d}{\comp} &
\framebox[0.95\width]{$\begin{array}{c}
\End^*_{\Der^b(B)}(X^*\otimes^{\mathbb{L}}_A N^*) \\
\otimes_k \\
\Hom^*_{\Der^b(B)}(X^*\otimes^{\mathbb{L}}_AM^*, X^*\otimes^{\mathbb{L}}_A N^*)
\end{array}$}
\arrow{d}{\comp} \\
\Hom^*_{\Der^b(A)}(M^*,N^*)
\arrow{r}{X^* \otimes^{\mathbb{L}}_A -} &
\Hom^*_{\Der^b(B)}(X^*\otimes^{\mathbb{L}}_A M^*, X^*\otimes^{\mathbb{L}}_A N^*)
\end{tikzcd}
};
\end{tikzpicture}
\]
\tikzexternaldisable
The upper square is commutative by part~(1). The functoriality of $X^*\otimes^{\mathbb{L}}_A-$ implies that the lower square commutes. Since the horizontal arrows are isomorphisms it is straightforward to check that $\Hom^*_{\Der^b(A)}(M^*,N^*)$ is finitely generated as an $H$-module if and only if $\Hom^*_{\Der^b(B)}(X^*\otimes^{\mathbb{L}}_A M^*, X^*\otimes^{\mathbb{L}}_A N^*)$ is finitely generated as an $H'$-module. It is also easy to see that the corresponding annihilators are isomorphic. Part (2) and (3) follow. 
\end{proof}

\begin{rmk}\label{standardnotneeded}
Let $F\colon \Der^b(A)\to \Der^b(B)$ be an arbitrary derived equivalence (not necessarily of standard type). By Theorem \ref{rickardstheorem}, there is some equivalence of standard type 
\[X^*\otimes_{A}^{\mathbb{L}}-\colon \Der^b(A)\to \Der^b(B)\] 
which agrees with $F$ on objects. Hence, under the hypotheses of the foregoing theorem, there are isomorphisms 
\[\mathcal{V}_A^H(M^*,N^*)\cong \mathcal{V}_B^{H'}(X^*\otimes_A^{\mathbb{L}} M^*,X^*\otimes_A^{\mathbb{L}} N^*)\cong \mathcal{V}_B^{H'}(F(M^*),F(N^*)).\]
The subtlety here is that $H'$ is not constructed directly using $F$ but going through determining a standard derived equivalence $X^*\otimes_A^{\mathbb{L}}-$ which agrees with $F$ on objects. This is not necessary if one sets $H=\HH^{\ev}(A)$ and $H'=\HH^{\ev}(B)$. In this case, one has 
\[\mathcal{V}_A^{\HH^{\ev}(A)}(M^*,N^*)\cong \mathcal{V}_B^{\HH^{\ev}(B)}(F(M^*),F(N^*)).\]
\end{rmk}

Summing up, the (Fg) condition on Hochschild cohomology as well as the support variety theory are invariant under derived equivalences (of standard type). In the next section we will provide an instance of this theorem.

\section{How to use the Theorem?}\label{examples}

In this section, we demonstrate how Theorem~\ref{thm:main} can be used
to produce examples of algebras which satisfy the (Fg) condition.  We
first summarize some earlier known results which are useful for
determining whether a given algebra satisfies (Fg).  Then we use some
of these results together with Theorem~\ref{thm:main} to discuss a new
example of an algebra satisfying~(Fg).  Throughout this section, we
let $k$ be an algebraically closed field.

As remarked in the introduction any algebra which satisfies (Fg) is a Gorenstein
algebra (see \cite[Proposition 1.2]{EHSST04}).  Recall that an algebra
$A$ is \emphbf{Gorenstein}, sometimes also called
\emphbf{Iwanaga-Gorenstein}, if $\injdim {}_AA<\infty$ and $\injdim
A_A<\infty$.  For Nakayama algebras, the following result by Nagase
reduces the problem of determining (Fg) to the problem of determining
Gorensteinness.

\begin{thm} \cite[Corollary~10]{Nagase}
\label{thm:nakayama-gorenstein-fg}
Let $A$ be a Nakayama $k$-algebra.  Then $A$ satisfies the (Fg)
condition if and only if $A$ is a Gorenstein algebra.
\end{thm}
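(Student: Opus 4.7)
The forward implication is immediate from \cite[Proposition~1.2]{EHSST04}, which shows that the (Fg) condition implies Gorensteinness for any finite-dimensional algebra, so the content of the statement is the converse. My plan is to exploit the very explicit structure of Nakayama algebras (parametrised by their Kupisch series and governed by a cyclic or linear quiver with admissible relations) in order to produce an honest generator of Hochschild cohomology witnessing (Fg). I would use the criterion mentioned after the definition of (Fg): once (Fg1) is in place, it suffices to verify (Fg2) for the single stalk complex $M^* = N^* = A/\rad{A}$. So the goal becomes producing a noetherian graded subalgebra $H \subseteq \HH^*(A)$ together with a proof that $\Ext^*_A(A/\rad{A}, A/\rad{A})$ is finitely generated as an $H$-module via $\varphi_{A/\rad{A}}$.

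The structural input I would rely on is that for a Gorenstein Nakayama algebra, after finitely many syzygy steps every simple module becomes Gorenstein-projective; and inside the Frobenius category of Gorenstein-projective modules, the syzygy functor $\Omega$ acts with a common finite period $d$ on the (finitely many) simples, a period that can be read off combinatorially from the Kupisch series. This $\Omega$-periodicity produces, for each simple $S$, a distinguished non-zero class $\eta_S \in \Ext^d_A(S,S)$, and $\Ext^{\geq N}_A(S,S)$ is spanned over $\Ext^{<d}_A(S,S)$ by the powers of $\eta_S$ for some bounded $N$. The next step is to globalise: I would construct a single class $\zeta \in \HH^{d}(A)$ (or a small finite set of such classes) by writing down the minimal projective bimodule resolution of $A$ --- for Nakayama algebras this resolution is explicit and itself exhibits a periodicity modulo a bounded initial part --- and reading off a periodicity element from it. Given such a $\zeta$, the subalgebra $H = \varphi_A^{-1}(\text{image of } k[\zeta])$ of $\HH^*(A)$ (possibly together with $\HH^0(A)$) is a finitely generated commutative graded algebra, hence noetherian, giving (Fg1); and $\varphi_S(\zeta) = \eta_S$ up to scalar on each simple $S$, which together with the periodicity statement above upgrades the module-theoretic periodicity into finite generation of $\Ext^*_A(S,S)$ over $H$, giving (Fg2).

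The step I expect to be the main obstacle is the bimodule-level part of the argument, namely producing a single Hochschild cohomology class $\zeta$ whose action by $\varphi_S$ recovers the syzygy-periodicity class $\eta_S$ for every simple $S$ simultaneously. Establishing periodicity of $\Omega^d S$ for each individual $S$ only provides classes in the various groups $\Ext^d_A(S,S)$, and it is not automatic that these can be glued into one class in $\HH^d(A)$. Carrying this out requires an explicit analysis of the minimal projective resolution of $A$ over $A^e$ --- which for Nakayama algebras is tractable but notationally heavy --- and a verification that the periodicity detected on the level of one-sided modules is already visible on that bimodule resolution. Once this is in place, the rest of the argument is bookkeeping with the Kupisch data.
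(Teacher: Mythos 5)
The paper does not prove this statement; it imports it wholesale from Nagase \cite[Corollary~10]{Nagase}, so there is no internal argument for me to compare against, and I am evaluating your proposal on its own.

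You have already put your finger on the fatal gap, and it is indeed fatal as written. Eventual $\Omega$-periodicity of the individual simples (or, more precisely, of the Gorenstein-projective syzygies $\Omega^{g}S$, since the simples themselves are generally not Gorenstein-projective) is a purely one-sided, module-theoretic phenomenon. It produces classes $\eta_S$ in the separate groups $\Ext^{d}_A(S,S)$, but it gives you no element of $\HH^{*}(A)$ at all, and no general principle manufactures one. What would actually deliver a class $\zeta\in\HH^{*}(A)$ with the desired behaviour under each $\varphi_S$ is bimodule-level control: something in the direction of eventual periodicity of $A$ as an $A^{e}$-module, $\Omega^{n+d}_{A^{e}}(A)\cong\Omega^{n}_{A^{e}}(A)$, or at least a concrete computation of the minimal $A^{e}$-resolution of $A$ far enough out to exhibit such a class. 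That is strictly stronger than the one-sided statement, it is where the actual content of the theorem lives, and your proposal asserts it as a plan rather than proving it. Until that step is carried out, nothing after it is justified.

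A secondary omission: (Fg2) for the stalk complex $A/\rad A$ requires finite generation of $\Ext^{*}_A(S,T)$ over $H$ for all pairs $S,T$ of simples, not only the diagonal $\Ext^{*}_A(S,S)$ that your sketch addresses. Once you have a genuine $\zeta$ acting via $\varphi_T$ on the target, periodicity of $\Omega$ should handle the off-diagonal pieces by the same bounded-window argument, but this must be said, since $\Ext^{*}_A(A/\rad A,A/\rad A)=\bigoplus_{S,T}\Ext^{*}_A(S,T)$ is what (Fg2) quantifies over.
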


Furthermore, there exists a concrete algorithm by Ringel~\cite{Ringel}
for determining whether a Nakayama algebra is Gorenstein.  For
Nakayama algebras with at most three simples, there is also a method
by Chen and Ye~\cite{ChenYe} for determining Gorensteinness.  Thus,
for Nakayama algebras, the problem of determining (Fg) is completely
solved, in the sense that there exists an algorithm for determining
whether any given Nakayama algebra satisfies (Fg).

The following result shows that in certain situations, the problem of
determining whether (Fg) is satisfied for an algebra $A$ can be reduced
to the same problem for the smaller algebra $eAe$, where $e$ is some
idempotent in~$A$.

\begin{thm} \cite[Theorem 8.1 (i),(iv)]{PSS}
\label{thm:eAe}
Let $A$ be a $k$-algebra, let $e$ be an idempotent in~$A$, and let $B
= A/\langle e \rangle$.  Assume that $\projdim_A (B/\rad B) < \infty$
and $\projdim_{(eAe)^{\op}} Ae < \infty$.  Then $A$ satisfies the (Fg)
condition if and only if $eAe$ satisfies the (Fg) condition.
\end{thm}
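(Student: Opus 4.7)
The plan is to exploit the recollement of bounded derived categories
\[
\Der^b(B) \rightleftarrows \Der^b(A) \rightleftarrows \Der^b(eAe)
\]
that arises naturally in this setting. The first step is to verify that the given projective-dimension hypotheses are exactly what is needed to lift the evident recollement of module categories to one of bounded derived categories: the hypothesis $\projdim_{(eAe)^{\op}} Ae < \infty$ makes $Ae\otimes_{eAe}^{\mathbb{L}}-$ preserve boundedness, while $\projdim_A(B/\rad B) < \infty$ makes the embedding $\Der^b(B)\hookrightarrow \Der^b(A)$ fully faithful with essential image equal to the kernel of $e(-)\colon\Der^b(A)\to\Der^b(eAe)$.

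Next, I would transfer the (Fg) condition through this recollement. Multiplication by $e\otimes e^{\op}\in A^e$ gives a natural graded ring homomorphism $r\colon\HH^*(A)\to\HH^*(eAe)$, and the restriction functor $e(-)$ is equivariant for $r$ in the sense that the $\HH^*(A)$-action on $\End^*_{\Der^b(A)}(M^*)$ via $\varphi_{M^*}$ corresponds, under $e(-)$, to the $\HH^*(eAe)$-action on $\End^*_{\Der^b(eAe)}(eM^*)$ via $\varphi_{eM^*}$, pulled back along $r$. Using the reduction (cited just after the definition of (Fg)) that condition (Fg2) need only be checked on the stalk $A/\rad A$, and decomposing the simples of $A$ into those coming from $eAe$ and those coming from $B$, one transfers (Fg2) between the two sides: the former correspond directly under $e(-)$, while the latter have finite projective dimension over $A$ by hypothesis, hence contribute only trivially to $\Ext^n_A(-,-)$ in sufficiently high degree.

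The main obstacle will be condition (Fg1), namely producing matching noetherian subalgebras on the two sides simultaneously. One approach is to start from a commutative noetherian subalgebra $H\subseteq\HH^*(A)$ witnessing (Fg) for $A$, set $H':=r(H)\subseteq\HH^*(eAe)$, and prove that $r|_{H}$ has kernel nilpotent modulo the annihilator of the relevant Ext, so that noetherianity is preserved after a harmless reduction. Closing the loop likely requires the companion result, proved separately in \cite{PSS}, that $A$ is Gorenstein if and only if $eAe$ is Gorenstein under the same two projective-dimension hypotheses; with Gorensteinness in hand on both sides one can invoke the equivalent characterisations of (Fg) listed after its definition to deduce the full equivalence. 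The key technical point is verifying the equivariance square above at the level of Hochschild cochains and controlling the perpendicular contribution from the $B$-simples uniformly in cohomological degree.
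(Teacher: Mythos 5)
This theorem is not proved in the paper at all: it is quoted verbatim from \cite[Theorem 8.1 (i),(iv)]{PSS} and used as a black box in the example of Section 4, so there is no internal proof to compare your sketch against. Judged on its own merits, your proposal is a reasonable roadmap towards the argument in \cite{PSS}, but it contains genuine gaps. The most serious one is the opening step: the two projective-dimension hypotheses are \emph{not} "exactly what is needed" to lift the module-category recollement to a recollement of bounded derived categories. Full faithfulness of $\Der^b(B)\to\Der^b(A)$ is equivalent to $A\to B=A/AeA$ being a homological epimorphism, i.e.\ to $AeA$ being a stratifying ideal ($\operatorname{Tor}_i^{eAe}(Ae,eA)=0$ for $i>0$ and $Ae\otimes_{eAe}eA\cong AeA$), and neither $\projdim_A(B/\operatorname{rad} B)<\infty$ nor $\projdim_{(eAe)^{\op}}Ae<\infty$ implies this. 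The setting of \cite{PSS} deliberately avoids the stratifying assumption; their mechanism is that the exact functor $e(-)$ is an \emph{eventually} homological isomorphism, meaning $\Ext^n_A(X,Y)\to\Ext^n_{eAe}(eX,eY)$ is an isomorphism only in sufficiently high degrees, and precisely because of the two hypotheses. Your phrase that the simples not killed by $e$ "correspond directly under $e(-)$" papers over this: that comparison is the technical heart of the result, not a formality, and it fails degreewise in general.

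The second substantive gap is the transfer of (Fg1)/(Fg2) in \emph{both} directions. Pushing a noetherian subalgebra $H\subseteq\HH^*(A)$ forward along the restriction map $r\colon\HH^*(A)\to\HH^*(eAe)$ is unproblematic, but for the converse direction there is no ring homomorphism $\HH^*(eAe)\to\HH^*(A)$ to pull a noetherian witness back along, and your proposed remedies ("kernel nilpotent modulo the annihilator", "closing the loop likely requires the companion Gorenstein result") are statements of intent rather than arguments. Note also that knowing $A$ and $eAe$ are simultaneously Gorenstein does not by itself give (Fg): the equivalent reformulations of (Fg) listed after its definition in this paper all still require a finite-generation statement, so the Gorenstein part (i) of \cite[Theorem 8.1]{PSS} cannot substitute for the missing noetherianity transfer. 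In short, the skeleton (idempotent recollement of module categories, the map $r$ induced by $M\mapsto eMe$, reduction of (Fg2) to $A/\operatorname{rad}A$, finite projective dimension of the $B$-simples killing their contribution in high degrees) points in the right direction, but the derived-recollement premise is false as stated and the key comparisons that make the equivalence work in both directions are asserted rather than proved.
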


For simplicity, we only consider derived equivalences induced by
tilting modules in our example.  By a result of Happel~\cite{Hap87},
if $T$ is a tilting module, then $T\otimes^{\mathbb{L}}_A-\colon
\Der^b(A)\to \Der^b(\End_A(T)^{\op})$ is a derived equivalence.  We
recall the definition of a tilting module:

\begin{defn}
\label{deftiltingmod}
Let $A$ be a $k$-algebra, and let $T$ be an
$A$-module.  Consider the following conditions on $T$:
\begin{enumerate}[(i)]
\item $\projdim_A T < \infty$.
\item $\Ext_A^i(T, T) = 0$ for every $i > 0$.
\item There exists an exact sequence $0 \to A \to T_0 \to \cdots \to
T_m \to 0$ of $A$-modules where every $T_i$ is in $\add T$.
\end{enumerate}
If all these conditions are true, then the module $T$ is called a
\emphbf{tilting module}.  If conditions (i) and~(ii) are true, and the
number of nonisomorphic direct summands of $T$ is $n-1$, where $n$ is
the number of simple $A$-modules (up to isomorphism), then $T$ is
called an \emphbf{almost complete tilting module}.  If $T$ is an
almost complete tilting module and $N \in \modu A$ is a module such
that $T \oplus N$ is a tilting module and $\add T \cap \add N = 0$,
then $N$ is called a \emphbf{complement} to $T$.
\end{defn}

In our example, we use the following result to find tilting modules.
The analogue of this result for cotilting modules is stated in \cite[Proposition 3.2]{BS98}.
We include a proof for the sake of completeness.  For the notion of
left approximations, we refer to~\cite{AR92}.

\begin{prop}
\label{prop:tilting}
Let $A$ be a $k$-algebra, let $M$ be an almost complete tilting
$A$-module, and let $X$ be an indecomposable complement to $M$.  If $f
\colon X \to E$ is a map which is both a left $\add M$-approximation
of $X$ and a monomorphism, then $M\oplus \Coker f$ is a
tilting module.
\end{prop}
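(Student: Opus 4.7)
Set $Y := \Coker f$ and $T' := M \oplus Y$; I will verify the three defining conditions of a tilting module (Definition~\ref{deftiltingmod}) for $T'$. Condition (i) is immediate: since $f$ is mono we have a short exact sequence $0 \to X \to E \to Y \to 0$, and since $X, E \in \add(M\oplus X)$, a tilting module of finite projective dimension, the long exact sequence gives $\projdim_A Y < \infty$, hence $\projdim_A T' < \infty$.

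For condition (ii), I split $\Ext_A^i(T', T') = 0$ for $i \geq 1$ into the four pieces $\Ext^i(M,M)$, $\Ext^i(Y,M)$, $\Ext^i(M,Y)$, $\Ext^i(Y,Y)$. The first vanishes since $M \oplus X$ is tilting. Applying $\Hom(-,M)$ to the approximation sequence and using $\Ext^i(X,M)=\Ext^i(E,M)=0$ reduces $\Ext^i(Y,M)=0$ to the surjectivity of $\Hom(E,M)\to\Hom(X,M)$, which is precisely the left $\add M$-approximation property of $f$. Applying $\Hom(M,-)$ yields $\Ext^i(M,Y)=0$ at once; the same argument with $\Hom(X,-)$ gives $\Ext^i(X,Y)=0$ for $i \geq 1$. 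The subtle piece is $\Ext^1(Y,Y)=0$: applying $\Hom(-,Y)$ to the approximation sequence and using the previous vanishings reduces this to the surjectivity of $\Hom(E,Y)\to\Hom(X,Y)$, $g\mapsto g\circ f$. Given $h\colon X\to Y$, I first use $\Ext^1(X,X)=0$ to find $h'\colon X\to E$ with $\pi\circ h' = h$, where $\pi\colon E\to Y$ is the cokernel map; then, since $E\in\add M$, the approximation property factors $h'$ as $g\circ f$ for some $g\colon E\to E$. Thus $h = \pi\circ g\circ f$, exhibiting $h$ in the image of the required map. Vanishing of $\Ext^i(Y,Y)$ for $i\geq 2$ follows from routine long exact sequence calculations.

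For condition (iii), I start with a tilting coresolution $0\to A\to T_0\to\cdots\to T_m\to 0$ with $T_i\in\add(M\oplus X)$, and produce a coresolution of $A$ by $\add T'$. The approximation sequence exhibits $X$, in $\Der^b(A)$, as quasi-isomorphic to the two-term complex $[E\to Y]$ with $E$ in degree~$0$. Writing each $T_i = M_i \oplus X^{r_i}$ with $M_i\in\add M$, I form the double complex obtained by replacing every $X^{r_i}$ summand by the column $E^{r_i}\to Y^{r_i}$; the total complex has terms in $\add T'$, is bounded, and by a spectral sequence (or direct cohomology) argument is quasi-isomorphic to $A$, yielding the desired exact sequence of length at most $m+1$. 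A conceptually cleaner alternative is to note that $T'$ is already a partial tilting module by (i) and (ii), and to invoke the Bongartz bound: a partial tilting module with $n$ nonisomorphic indecomposable summands is tilting. Here $M$ contributes $n-1$ summands; and $Y$ must contribute at least one further summand not in $\add M$, for otherwise the extension class of the approximation sequence lies in $\Ext^1(Y,X)\subseteq\Ext^1(M,X)=0$, so the sequence splits and $X\in\add M$, contradicting $\add M\cap\add X = 0$. The main obstacle is condition (iii): making the double-complex differentials explicit requires lifting maps $X^{r_i}\to T_{i+1}$ compatibly once the $X$-summands of the target have themselves been resolved, which takes some care to arrange coherently.
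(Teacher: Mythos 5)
Your verification of conditions (i) and (ii) is correct and essentially the paper's argument; in fact your treatment of $\Ext^1_A(Y,Y)$ (lifting $h\colon X\to Y$ through $\pi$ using $\Ext^1_A(X,X)=0$ and then factoring through the approximation $f$) supplies a detail the paper compresses into ``similarly''. The problem is condition (iii), which is where essentially all the work lies, and neither of your two routes establishes it. Your first route (replace each $X^{t_i}$ by the two-term complex $E^{t_i}\to Y^{t_i}$ and totalize) founders on exactly the point you flag yourself: the components $X^{t_i}\to X^{t_{i+1}}$ and $M_i\to X^{t_{i+1}}$ of the differential only lift to the replacement complexes up to homotopy, and there is no a priori choice of strictly commuting lifts squaring to zero, so the ``double complex'' does not exist without further argument. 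Acknowledging that ``this takes some care to arrange coherently'' is acknowledging that the proof is not there; the paper's proof is precisely the careful version of this idea, done degree by degree: it splices with the monomorphisms $f_i\colon T_i\rightarrowtail M_i\oplus E^{t_i}$, forms a pushout at each stage, uses $\Ext^1_A(Y,M)=0$ to split the pushout sequence (so the new term lands in $\add(M\oplus Y)$), and uses the Snake Lemma to restart the induction on the next cokernel.

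Your fallback, ``$M\oplus Y$ is a partial tilting module with $n$ nonisomorphic indecomposable summands, hence tilting by the Bongartz bound'', is not available in the generality of this paper. The tilting modules here are generalized tilting modules of arbitrary finite projective dimension (Definition~\ref{deftiltingmod} only requires $\projdim_A T<\infty$ and a finite coresolution of $A$), and Bongartz's completion lemma is specific to projective dimension at most one. For finite projective dimension, self-orthogonal partial tilting modules need not be direct summands of tilting modules (Rickard--Schofield), and the statement that a self-orthogonal module of finite projective dimension with $n$ pairwise nonisomorphic indecomposable summands is automatically tilting is not a theorem you can cite -- it is exactly the kind of completion statement that is open in this setting. (Your counting argument that $Y$ contributes a summand outside $\add M$ is fine, but it only becomes useful once such a criterion is available, e.g.\ if one added the hypothesis $\projdim_A T\le 1$.) So as written the proof of condition (iii) has a genuine gap, and an explicit construction of the coresolution $0\to A\to T_0'\to\cdots\to T_{m'}'\to 0$ in $\add(M\oplus Y)$, along the inductive lines of the paper, is still needed.
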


\begin{proof}
Let $Y = \Coker f$. There is an exact sequence
$\eta\colon 0 \to X \xrightarrow{f} E \to Y \to 0$.
We check that the three requirements of Definition~\ref{deftiltingmod} 
are satisfied for the module $M \oplus Y$.

Since $M \oplus X$ is a tilting module, both $E\in \add M$ and $X$ have finite
projective dimension. It follows that $M\oplus Y$ has finite
projective dimension. 
For the second requirement, we apply the functor $\Hom_A(-,M)$ to the sequence $\eta$ and get a long exact sequence
\begin{align*}
0 \to
&\Hom_A(Y,M) \to
 \Hom_A(E,M) \xrightarrow{f^*}
 \Hom_A(X,M) \to \Ext_A^1(Y,M) \to \cdots 
\end{align*}
Since $M \oplus X$ is a tilting module, $E$ is in $\add
M$ and $f^*$ is an epimorphism, it follows that $\Ext_A^i(Y,M)=0$ for $i>0$.
Similarly, we show that $\Ext_A^i(M,Y)$ and $\Ext_A^i(Y,Y)$ are zero for $i > 0$.
We infer that $\Ext_A^i(M \oplus Y, M \oplus Y) =
0$ for $i > 0$.

We now check the third requirement, i.e. we construct an exact sequence 
\begin{equation}
\tag{$*$}
\label{eqn:tilting-sequence}
 0 \to A \to T'_0 \to \cdots \to T'_{m'} \to 0
\end{equation}
 of $A$-modules, where each $T'_i$ lies in $\add (M \oplus Y)$. Since $M \oplus X$ is a tilting module, there exists an exact sequence $0 \to A \stackrel{\iota_0}{\to} T_0 \to \cdots \to T_m \to 0$ of $A$-modules, where each $T_i$ is in $\add (M \oplus X)$. Taking out all summands
isomorphic to the indecomposable module $X$, we can decompose the
module $T_i$ as $T_i \cong M_i \oplus X^{t_i}$, where $M_i$ is in
$\add M$ and $t_i \ge 0$ is an integer. Define the monomorphism $f_i\colon T_i \to M_i \oplus E^{t_i}$, by using the identity on $M_i$ and the map $f$ on each summand in
$X^{t_i}$. To construct the exact sequence \eqref{eqn:tilting-sequence}, we start
with the exact commutative diagram
\[
\begin{tikzcd}
0 \arrow{r} &
A \arrow[equal]{d} \arrow{r}{\iota_0} &
T_0 \arrow{r} \arrow[rightarrowtail]{d}{f_0} &
K_{1} \arrow{r}\arrow[rightarrowtail]{d}{\alpha_1} &
0 \\
0 \arrow{r} &
A \arrow{r}{f_0\iota_0} & M_0 \oplus E^{t_0} \arrow{r} &
K_{1}^{'} \arrow{r} &
0
\end{tikzcd}
\]
where $\alpha_1\colon K_1\to K_1'$ is induced by the universal property of cokernels. Let $\iota_1\colon K_1\to T_1$ be the map induced by $T_0\to T_1$. Setting $T'_0:=M_0 \oplus E^{t_0}$ in the lower exact sequence of the above diagram, we get the first step of the sequence \eqref{eqn:tilting-sequence}. Let $Y_1=\Coker{\alpha_1}$. Then $Y_1\cong \Coker{f_0}\in \add{Y}$. Taking the pushout of $\alpha_1$ along $f_1\iota_1$, we obtain the exact commutative diagram
\begin{equation}
\label{eqn:tilting-diagram-1}
\begin{tikzcd}
0 \arrow{r} &
K_1 \arrow[rightarrowtail]{d}{f_1\iota_1} \arrow{r}{\alpha_1} &
K_1' \arrow{r}{\beta_1} \arrow[rightarrowtail]{d}{\iota'_1} &
Y_1 \arrow{r}\arrow[equal]{d} &
0 \\
0 \arrow{r} &
M_1\oplus E^{t_1} \arrow{r}{} &
T_1^{'} \arrow{r} &
Y_{1} \arrow{r} & 0
\end{tikzcd}
\end{equation}
We know from above that $\Ext_A^1(Y,M) = 0$. Since $Y_1$ is in
$\add Y$ and $M_1 \oplus E^{t_1}$ is in $\add M$, it follows that the bottom exact
sequence of (\ref{eqn:tilting-diagram-1}) splits, and thus $T'_1$ is in $\add (M \oplus Y)$. Let $K'_2=\Coker{\iota'_1}\cong \Coker{f_1\iota_1}$. Hence, up to this point, we have constructed the following part of the exact sequence \eqref{eqn:tilting-sequence}:
\[
0\to A\to T'_0\to T'_1\to K'_2\to 0
\]
with $T'_0$ and $T'_1$ in $\add(M \oplus Y)$. 
By applying the Snake Lemma to the exact commutative diagram
\[
\begin{tikzcd}
0 \arrow{r} &
K_1 \arrow{r}{f_1\iota_1} \arrow[rightarrowtail]{d}{\iota_1} &
M_1 \oplus E^{t_1} \arrow{r} \arrow[equal]{d} &
K'_{2} \arrow{r} \arrow[twoheadrightarrow]{d}{\beta_2} &
0 \\
0 \arrow{r} &
T_1 \arrow{r}{f_1} &
M_1 \oplus E^{t_1} \arrow{r} &
Y_{2} \arrow{r} &
0
\end{tikzcd}
\]
we get an exact sequence similar to the upper exact sequence of (\ref{eqn:tilting-diagram-1}).
The construction continues in the same way as above and the proof is finished. 
\end{proof}

Using the above results, we now construct two derived equivalent
algebras, where one of the algebras is known to satisfy (Fg).  By
Theorem~\ref{thm:main}, it follows that the other algebra also satisfies
(Fg).

\begin{ex}
\label{ex:nakayama}
Let $A = kQ/\langle\rho\rangle$ be the $k$-algebra given by the
following quiver and relations:
\[
Q \colon
\begin{tikzcd}
{}
& 1 \arrow{dr}{a} \\
3 \arrow{ur}{c} &&
2 \arrow{ll}{b}
\end{tikzcd}
\qquad
\rho = \{ bacba, cbac \}.
\]
Then $A$ is a Nakayama algebra.  The indecomposable projective modules
are
\[
P_1\colon
\begin{array}{c}
1 \\
2 \\
3 \\
1 \\
2
\end{array},
\qquad
P_2\colon
\begin{array}{c}
2 \\
3 \\
1 \\
2 \\
3
\end{array}
\qquad
\text{and}
\qquad
P_3\colon
\begin{array}{c}
3 \\
1 \\
2 \\
3
\end{array}.
\]
By
\cite[Proposition~3.14]{ChenYe}, this means that $A$ is a Gorenstein
algebra (its normalized admissible sequence being $(4, 5, 5)$).  Since $A$ is a Gorenstein Nakayama algebra, it satisfies (Fg)
by Theorem~\ref{thm:nakayama-gorenstein-fg}.

We now find an algebra which is derived equivalent to~$A$.  We use
Proposition~\ref{prop:tilting} to find a tilting module.  Consider the
almost complete tilting module $P_1 \oplus P_2$ with complement~$P_3$.
The inclusion $P_3 \hookrightarrow P_2$ is a left $\add (P_1 \oplus
P_2)$-approximation of $P_3$ and a monomorphism.  By
Proposition~\ref{prop:tilting}, the module $T = P_1 \oplus P_2 \oplus S_2$
is a tilting module.

We find the quiver and relations for the algebra $\End_A(T)$.  The
vertices of the quiver correspond to the three indecomposable summands
$P_1$, $P_2$ and $S_2$ of $T$.  The arrows of the quiver correspond to
maps between these indecomposable modules.  The following diagram 
depicts a $k$-basis for $\End_A(P_1\oplus P_2\oplus S_2)$ (except for the identity
maps):
\begin{equation}
\label{eqn:ex-nakayama-maps}
%
%
\begin{tikzpicture}[xscale=1,yscale=1,baseline=20]
\node (S2) at (0,0) {$S_2$};
\node (P2) at (3.6,0) {$P_2$};
\node (P1) at (1.8,1.6) {$P_1$};
\draw[line width=0.5pt,-cm to] (S2) to node[above left]{\tiny$\theta$} (P1);
\draw[line width=0.5pt,-cm to] (P2) to node[below]{\tiny$\eta$} (S2);
\draw[line width=0.5pt,-cm to,bend right=50] (P2.north) to node[right]{\tiny$\epsilon$} (P1.east);
\draw[line width=0.5pt,-cm to] ([xshift=-2.5pt,yshift=-2.5pt] P1.south east) to node[above right=2pt]{\tiny$\delta$} ([xshift=-2.5pt,yshift=-2.5pt] P2.north west);
\draw[line width=0.5pt,-cm to] ([xshift=2.5pt,yshift=2.5pt] P2.north west) to node[below left=2pt] {\tiny$\gamma$} ([xshift=2.5pt,yshift=2.5pt] P1.south east);
\draw[line width=0.5pt,-cm to,loop right] (P2) to node[right]{\tiny$\beta$} (P2);
\draw[line width=0.5pt,-cm to,loop above] (P1) to node[above]{\tiny$\alpha$} (P1);
\end{tikzpicture}
\end{equation}
The maps are defined as follows:
\[
\alpha \colon
\begin{tikzcd}[row sep=0ex]
1 \arrow[mapsto]{dddr} & 1 \\
2 \arrow[mapsto]{dddr} & 2 \\
3                      & 3 \\
1                      & 1 \\
2                      & 2
\end{tikzcd}
\qquad
\beta \colon
\begin{tikzcd}[row sep=0ex]
2 \arrow[mapsto]{dddr} & 2 \\
3 \arrow[mapsto]{dddr} & 3 \\
1                      & 1 \\
2                      & 2 \\
3                      & 3
\end{tikzcd}
\qquad
\gamma \colon
\begin{tikzcd}[row sep=0ex]
1 \arrow[mapsto]{ddr} & 2 \\
2 \arrow[mapsto]{ddr} & 3 \\
3 \arrow[mapsto]{ddr} & 1 \\
1                     & 2 \\
2                     & 3
\end{tikzcd}
\qquad
\delta \colon
\begin{tikzcd}[row sep=0ex]
2 \arrow[mapsto]{dr} & 1 \\
3 \arrow[mapsto]{dr} & 2 \\
1 \arrow[mapsto]{dr} & 3 \\
2 \arrow[mapsto]{dr} & 1 \\
3                    & 2
\end{tikzcd}
\]
\[
\epsilon \colon
\begin{tikzcd}[row sep=0ex]
2 \arrow[mapsto]{ddddr} & 1 \\
3                       & 2 \\
1                       & 3 \\
2                       & 1 \\
3                       & 2
\end{tikzcd}
\qquad
\eta \colon
\begin{tikzcd}[row sep=0ex]
2 \arrow[mapsto]{r} & 2 \\
3                       \\
1                       \\
2                       \\
3
\end{tikzcd}
\qquad
\theta \colon
\begin{tikzcd}[row sep=0ex]
2 \arrow[mapsto]{ddddr} & 1 \\
                        & 2 \\
                        & 3 \\
                        & 1 \\
                        & 2
\end{tikzcd}
\]
We have the
following relations between these maps:
\begin{equation}
\label{eqn:ex-nakayama-relations}
\epsilon = \alpha\delta = \delta\beta = \theta\eta, \qquad
\alpha = \delta\gamma
\qquad\text{and}\qquad
\beta = \gamma\delta.
\end{equation}
We observe that the maps $\alpha$, $\beta$ and $\epsilon$ can be
expressed in terms of other maps.  By removing these maps from
\eqref{eqn:ex-nakayama-maps}, we obtain the quiver of the algebra
$\End_A(T)$.  We find the relations of the algebra by using
equations~\eqref{eqn:ex-nakayama-relations} and observing that the
compositions $\gamma\delta\gamma$, $\eta\gamma$ and $\gamma\theta$ are
zero.  Let $Q'$ and $\rho'$ denote the quiver and relations
\[
Q'\colon
%
%
\begin{tikzpicture}[xscale=1,yscale=1,baseline=20]
\node (S2) at (0,0) {$\mathrm{III}$};
\node (P2) at (3.3,0) {$\mathrm{II}$};
\node (P1) at (1.65,1.5) {$\mathrm{I}$};
\draw[line width=0.5pt,-cm to] (S2) to node[above left]{\tiny$\theta$} (P1);
\draw[line width=0.5pt,-cm to] (P2) to node[below]{\tiny$\eta$} (S2);
\draw[line width=0.5pt,-cm to] ([xshift=-2.5pt,yshift=-2.5pt] P1.south east) to node[above right=2pt]{\tiny$\delta$} ([xshift=-2.5pt,yshift=-2.5pt] P2.north west);
\draw[line width=0.5pt,-cm to] ([xshift=2.5pt,yshift=2.5pt] P2.north west) to node[below left=2pt] {\tiny$\gamma$} ([xshift=2.5pt,yshift=2.5pt] P1.south east);
\end{tikzpicture}
\qquad\text{and}\qquad
\rho' =
\{ \gamma\delta\gamma, \eta\gamma,
   \delta\gamma\delta - \theta\eta, \gamma\theta \}.
\]
Then the endomorphism algebra $\End_A(T)$ must be a factor algebra of
the algebra $kQ'/\rho'$.  Furthermore, we observe that both these
algebras have $k$-dimension $10$, so $\End_A(T) \cong
kQ'/\rho'$.

Let $B = \End_A(T)^{\op}$.  Then the algebras $A$ and $B$ are
derived equivalent.  Since $A$ satisfies the (Fg) condition,
Theorem~\ref{thm:main} tells us that $B$ also satisfies the (Fg)
condition.

We finally observe that we could not have used Theorem~\ref{thm:eAe}
to find out that the algebra $B$ satisfies (Fg).  We can easily check
that all the three simple $B$-modules have infinite projective
dimension.  This means that for any choice of nontrivial idempotent
$e$ in $B$, the assumption of Theorem~\ref{thm:eAe}
is not satisfied.  Therefore it is not possible to use this theorem to reduce
the question of whether (Fg) holds for $B$ to the same question about
a smaller algebra $eBe$.
\end{ex}

This example demonstrates that Theorem~\ref{thm:main} can be used to
show that (Fg) holds for an algebra which doesn't belong to one of the classes
of algebras where (Fg) is known to hold, e.g. group algebras, and for which other
general theorems for deducing (Fg) do not apply.

\bibliographystyle{alpha}
\bibliography{publication}

\newcommand{\etalchar}[1]{$^{#1}$}
\begin{thebibliography}{EHS{\etalchar{+}}04}

\bibitem[AR92]{AR92}
Maurice Auslander and Idun Reiten.
\newblock Homologically finite subcategories.
\newblock In {\em Representations of algebras and related topics ({K}yoto,
  1990)}, volume 168 of {\em London Math. Soc. Lecture Note Ser.}, pages 1--42.
  Cambridge Univ. Press, Cambridge, 1992.

\bibitem[BKSS15]{BKSS}
Aslak~Bakke Buan, Henning Krause, Nicole Snashall, and {\O}yvind Solberg.
\newblock Axiomatic approach to support varieties.
\newblock preprint in preparation, 2015.

\bibitem[BO08]{BO08}
Petter~Andreas Bergh and Steffen Oppermann.
\newblock Cohomology of twisted tensor products.
\newblock {\em Journal of Algebra}, 320(8):3327--3338, 2008.

\bibitem[BS98]{BS98}
Aslak~Bakke Buan and {\O}yvind Solberg.
\newblock Relative cotilting theory and almost complete cotilting modules.
\newblock In {\em Algebras and modules, {II} ({G}eiranger, 1996)}, volume~24 of
  {\em CMS Conference Proceedings}, pages 77--92. American Mathematical
  Society, Providence, RI, 1998.

\bibitem[Car83]{Car83}
Jon~F. Carlson.
\newblock The varieties and the cohomology ring of a module.
\newblock {\em Journal of Algebra}, 85(1):104--143, 1983.

\bibitem[CY14]{ChenYe}
Xiao-Wu Chen and Yu~Ye.
\newblock Retractions and {G}orenstein homological properties.
\newblock {\em Algebras and Representation Theory}, 17(3):713--733, 2014.

\bibitem[EHS{\etalchar{+}}04]{EHSST04}
Karin Erdmann, Miles Holloway, Nicole Snashall, {\O}yvind Solberg, and Rachel
  Taillefer.
\newblock Support varieties for selfinjective algebras.
\newblock {\em $K$-Theory}, 33(1):67--87, 2004.

\bibitem[ES11]{ES11b}
Karin Erdmann and {\O}yvind Solberg.
\newblock Radical cube zero weakly symmetric algebras and support varieties.
\newblock {\em Journal of Pure and Applied Algebra}, 215(2):185--200, 2011.

\bibitem[Far07]{Fa07}
Rolf Farnsteiner.
\newblock Tameness and complexity of finite group schemes.
\newblock {\em Bulletin of the London Mathematical Society}, 39(1):63--70,
  2007.

\bibitem[FP11]{FP11}
Eric~Marc Friedlander and Julia Pevtsova.
\newblock Constructions for infinitesimal group schemes.
\newblock {\em Transactions of the American Mathematical Society},
  363(11):6007--6061, 2011.

\bibitem[Hap87]{Hap87}
Dieter Happel.
\newblock On the derived category of a finite-dimensional algebra.
\newblock {\em Commentarii Mathematici Helvetici}, 62(3):339--389, 1987.

\bibitem[Hap91]{Hap91}
Dieter Happel.
\newblock On {G}orenstein algebras.
\newblock In {\em Representation theory of finite groups and finite-dimensional
  algebras ({B}ielefeld, 1991)}, volume~95 of {\em Progress in Mathematics},
  pages 389--404. Birkh\"auser, Basel, 1991.

\bibitem[Hol00]{Hol00}
Thorsten Holm.
\newblock {H}ochschild cohomology rings of algebras {$k[X]/(f)$}.
\newblock {\em Beitr{\"a}ge zur Algebra und Geometrie. Contributions to Algebra
  and Geometry}, 41(1):291--301, 2000.

\bibitem[HTT08]{HTT08}
Ryoshi Hotta, Kiyoshi Takeuchi, and Toshiyuki Tanisaki.
\newblock {\em {$D$}-modules, perverse sheaves, and representation theory},
  volume 236.
\newblock Birkh\"auser Boston, Inc., Boston, MA, 2008.
\newblock Translated from the 1995 Japanese edition by Takeuchi.

\bibitem[KZ98]{KZ98}
Steffen K{\"o}nig and Alexander Zimmermann.
\newblock {\em Derived equivalences for group rings}, volume 1685 of {\em
  Lecture Notes in Mathematics}.
\newblock Springer-Verlag, Berlin, 1998.
\newblock With contributions by Bernhard Keller, Markus Linckelmann, Jeremy
  Rickard and Rapha{\"e}l Rouquier.

\bibitem[Lin11]{Lin11}
Markus Linckelmann.
\newblock Finite generation of {H}ochschild cohomology of {H}ecke algebras of
  finite classical type in characteristic zero.
\newblock {\em Bulletin of the London Mathematical Society}, 43(5):871--885,
  2011.

\bibitem[Nag11]{Nagase}
Hiroshi Nagase.
\newblock Hochschild cohomology and {G}orenstein {N}akayama algebras.
\newblock In {\em Proceedings of the 43rd {S}ymposium on {R}ing {T}heory and
  {R}epresentation {T}heory}, pages 37--41. Symp. Ring Theory Represent. Theory
  Organ. Comm., Soja, 2011.

\bibitem[PSS14]{PSS}
Chrysostomos Psaroudakis, {\O}ystein Skarts{\ae}terhagen, and {\O}yvind
  Solberg.
\newblock {G}orenstein categories, singular equivalences and finite generation
  of cohomology rings in recollements.
\newblock {\em Transactions of the American Mathematical Society Series B},
  1:45--95, 2014.

\bibitem[Ric89]{Ric89}
Jeremy Rickard.
\newblock Morita theory for derived categories.
\newblock {\em Journal of the London Mathematical Society. Second Series},
  39(3):436--456, 1989.

\bibitem[Ric91]{Ric91}
Jeremy Rickard.
\newblock Derived equivalences as derived functors.
\newblock {\em Journal of the London Mathematical Society. Second Series},
  43(1):37--48, 1991.

\bibitem[Rin13]{Ringel}
Claus~Michael Ringel.
\newblock The {G}orenstein projective modules for the {N}akayama algebras. {I}.
\newblock {\em Journal of Algebra}, 385:241--261, 2013.

\bibitem[Rot09]{Rot09}
Joseph~Jonah Rotman.
\newblock {\em An introduction to homological algebra}.
\newblock Universitext. Springer, New York, second edition, 2009.

\bibitem[Ska16]{Ska16}
{\O}ystein Skarts{\ae}terhagen.
\newblock Singular equivalence and the ({F}g) condition.
\newblock {\em Journal of Algebra}, 452:66--93, 2016.

\bibitem[Sol06]{Sol06}
{\O}yvind Solberg.
\newblock Support varieties for modules and complexes.
\newblock In {\em Trends in representation theory of algebras and related
  topics}, volume 406 of {\em Contemporary Mathematics}, pages 239--270.
  American Mathematical Society, Providence, RI, 2006.

\bibitem[SS04]{SS04}
Nicole Snashall and {\O}yvind Solberg.
\newblock Support varieties and {H}ochschild cohomology rings.
\newblock {\em Proceedings of the London Mathematical Society. Third Series},
  88(3):705--732, 2004.

\bibitem[Wan14]{Wan14}
Zhengfang Wang.
\newblock Singular equivalence of {M}orita type with level.
\newblock arXiv:1410.3140, 2014.

\bibitem[Wei94]{Wei94}
Charles~A. Weibel.
\newblock {\em An introduction to homological algebra}, volume~38 of {\em
  Cambridge Studies in Advanced Mathematics}.
\newblock Cambridge University Press, Cambridge, 1994.

\bibitem[Zim14]{Zim14}
Alexander Zimmermann.
\newblock {\em Representation Theory: A Homological Algebra Point of View},
  volume~19 of {\em Algebra and Applications}.
\newblock Springer Verlag, 2014.

\end{thebibliography}

\end{document}